\newif\ifpdf\ifx\pdfoutput\undefined\pdffalse\else\pdfoutput=1\pdftrue\fi
  \ifpdf\pdfinfo{/Title      Block coordinate descent for multi-convex optimization
                 /Author     Y. Xu, W. Yin
                }
  \else\usepackage{graphicx}\fi
\newtheorem{remark}{Remark}[section]
\newcommand{\vc}{\mathrm{vec}}
\newcommand{\bm}[1]{\boldsymbol{#1}}
\newcommand{\eps}{\epsilon}
\newcommand{\st}{\text{ s.t. }}
\newcommand{\mbfa}{\mathbf{A}}
\newcommand{\mbfb}{\mathbf{B}}
\newcommand{\mbfc}{\mathbf{C}}
\newcommand{\mbfi}{\mathbf{I}}
\newcommand{\mbfm}{\mathbf{M}}
\newcommand{\mbfu}{\mathbf{U}}
\newcommand{\mbfx}{\mathbf{X}}
\newcommand{\mbr}{\mathbb{R}}
\newcommand{\mca}{\mathcal{A}}
\newcommand{\mcb}{\mathcal{B}}
\newcommand{\mcc}{\mathcal{C}}
\newcommand{\mcm}{\mathcal{M}}
\newcommand{\mcn}{\mathcal{N}}
\newcommand{\mcp}{\mathcal{P}}
\newcommand{\mcs}{\mathcal{S}}
\newcommand{\mcw}{\mathcal{W}}
\newcommand{\mcx}{\mathcal{X}}
\newcommand{\mcy}{\mathcal{Y}}
\newcommand{\mcz}{\mathcal{Z}}
\def\myref{{\global\advance\refnum by 1} {\bf \large Lecture \the \refnum. }}
\newcommand{\bfx}{{\bf x}}
\newcommand{\bfa}{{\bf a}}
\newcommand{\bfb}{{\bf b}}
\newcommand{\sign}{{\mathrm{sign}}}
\DeclareMathOperator*{\argmin}{argmin}
\begin{document}

\title{Alternating proximal gradient method for \\sparse nonnegative Tucker decomposition}
\author{Yangyang Xu\thanks{\url{yangyang.xu@rice.edu}. Department of Applied and Computational Mathematics, Rice University, Houston, Texas.}
}

\date{\today}

\maketitle

\begin{abstract}Multi-way data arises in many applications such as electroencephal-ography (EEG) classification, face recognition, text mining and hyperspectral data analysis. Tensor decomposition has been commonly used to find the hidden factors and elicit the intrinsic structures of the multi-way data. This paper considers sparse nonnegative Tucker decomposition (NTD), which is to decompose a given tensor into the product of a core tensor and several factor matrices with sparsity and nonnegativity constraints. An alternating proximal gradient method (APG) is applied to solve the problem. The algorithm is then modified to sparse NTD with missing values. Per-iteration cost of the algorithm is estimated scalable about the data size, and global convergence is established under fairly loose conditions. Numerical experiments on both synthetic and real world data demonstrate its superiority over a few state-of-the-art methods for (sparse) NTD from partial and/or full observations. The MATLAB code along with demos are accessible from the author's homepage.
\end{abstract}
\begin{keywords}
multi-way data, sparse nonnegative Tucker decomposition, alternating proximal gradient method, non-convex optimization, sparse optimization
\end{keywords}

\section{Introduction}
A \emph{tensor} is a multi-dimensional array. For example, a \emph{vector} is a first-order {tensor}, and a \emph{matrix} is a second-order tensor. The order of a tensor is the number of dimensions, also called \emph{way} or \emph{mode}. Tensors naturally arise in the applications that collect data along multiple dimensions, including space, time, and spectrum, from different subjects (e.g., patients), under varying conditions, and in different modalities. They can also be created by tensorization of lower dimensional data \cite{cichockitensor2014}. Examples include medical data (CT, MRI, EEG), text data and hyperspectral images. An efficient approach to elicit the intrinsic structure of multi-dimensional data is tensor decomposition. Two commonly used tensor decompositions are CANDECOMP/PARAFAC decomposition (CPD) \cite{carroll1970analysis, harshman1970foundations} and Tucker decomposition (TD) \cite{tucker1966some}. CPD decomposes an $N$th-order tensor $\bm{\mcm}$ into the product of $N$ factor matrices $\mbfa_1,\cdots,\mbfa_N$, and TD decomposes $\bm{\mcm}$ into the product of a core tensor $\bm{\mcc}$ and $N$ factor matrices $\mbfa_1,\cdots,\mbfa_N$. 

This paper focuses on sparse nonnegative Tucker decomposition (NTD) \cite{kim2007nonnegative}, which imposes nonnegativity and uses $\ell_1$-regularization terms to promote sparsity structure on the core tensor and/or factor matrices. Nonnegativity allows only additivity, so the solutions are often intuitive to understand and explain. Promoting the sparsity of the core tensor aims at improving the interpretability of the  solutions. Roughly speaking, the core tensor interacts with all the factor matrices, and a simple one is often preferred \cite{kiers1998joint}. Consider a three-way tensor, for example. The $(1,1,1)$-th component of the core tensor couples the first columns of three factor matrices together. If it is not \emph{zero}, then the three columns interacts with each other. Otherwise, they have no or only weak relations. Forcing the core tensor to be sparse can often keep strong interactions between the factor matrices and remove the weak ones. Sparse factor matrices make the decomposed parts more meaningful and can enhance uniqueness as explained in \cite{morup2008algorithms}. Sparse NTD has found a large number of applications such as in EEG classification \cite{cong2012feature}, hyperspectral data analysis \cite{zhang2008tensor}, text mining \cite{morup2008algorithms}, face recoginition \cite{zafeiriou2009discriminant}, and so on.

\subsection{Related work}
NTD is a highly non-convex problem, and sparse regularizers make the problem even harder. A natural and often efficient way to solve the problem is to alternatingly update the core tensor and factor matrices. It includes, but not limited to, alternating least squares method (ALS) \cite{friedlander2008computing}, column-wise coordinate descent (CCD) \cite{liu2011sparse}, higher-order multiplicative update (HONMF) \cite{morup2008algorithms}, and hierarchical alternating least squares (HALS) \cite{phan2011extended}. ALS alternatingly updates the core tensor and factor matrices by solving a sequence of nonnegative least squares (NLS) problems, which requires to calculate matrix inverse and make ALS unsuitable for large-scale problems\footnote{There appears no exact definition of ``large-scale''. The concept can involve with the development of the computing power. Here, we roughly mean there are over millions of variables or data values.}. For this reason, \cite{friedlander2008computing} simply restricts the core tensor to be super-diagonal in its numerical tests. CCD has closed form update for each column of a factor matrix. However, to update the core tensor, it still requires to solve a big NLS problem, which makes CCD unsuitable for large-scale problems either. HONMF is an extension of the multiplicative update method in \cite{lee2001algorithms} for nonnegative matrix factorization \cite{paatero1994positive, lee1999learning} and has a relatively low per-iteration cost. At each iteration, it only needs some tensor-matrix multiplications and component-wise divisions. The drawback of HONMF is its slow convergence, which makes the algorithm often run a large number of iterations to reach an acceptable data fitting. Like ALS, HALS needs to solve a sequence of NLS problems, but it updates factor matrices in a column-wise way and the core tensor component-wisely, which enables closed form solutions for all subproblems. In addition, HALS often converges faster than HONMF. However, as shown in \cite{phan2011damped}, the convergence speed of HALS is still not satisfying.

There are also algorithms that update the core tensor and factor matrices simultaneously, such as the damped Gauss-Newton method (dGN) in \cite{phan2011damped}.  It is demonstrated that dGN overwhelmingly outperforms HONMF and HALS in terms of convergence speed. 

Recently, \cite{xublock} proposed an alternating proximal gradient method (APG) for solving NCP, and it was observed superior to some other algorithms such as the alternating direction method of multiplier (ADMM) \cite{zhang2010admnmf} and alternating nonnegative least squares method (ANLS) \cite{kim2008non, kim2008toward}  in both speed and solution quality. Unlike ANLS that exactly solves each subproblem, APG updates every factor matrix by solving a relaxed subproblem with a separable quadratic objective. Each relaxed subproblem has a closed form solution, which makes low per-iteration cost. Using an extrapolation technique, APG also converges very fast. 

\subsection{Overview of tensor}\label{sec:tensor}

\textbf{Notation.} We use small letters $a,x,\cdots$ for scalars, bold small letters $\bfa, \bfx,\cdots$ for vectors, bold capital letters $\mbfa,\mbfb,\cdots$ for matrices and bold caligraphic letters $\bm{\mcc},\bm{\mcm},\cdots$ for tensors. The components of a tensor $\bm{\mcx}$ are written in the form of $x_{i_1i_2\cdots i_N}$, which denotes the $(i_1,i_2,\cdots,i_N)$-th component of $\bm{\mcx}$.

Before proceeding with the model, we overview some tensor related concepts. For more details,  we refer the readers to the nice review paper \cite{kolda2009tensor}.
\begin{itemize}
\item A \emph{fiber} of $\bm{\mcx}$ is a vector obtained by fixing all indices of $\bm{\mcx}$ except one. 
\item The \emph{vectorization} of $\bm{\mcx}$ gives a vector, which is obtained by stacking all mode-1 fibers of $\bm{\mcx}$ and denoted by $\vc(\bm{\mcx})$. 
\item The mode-$n$ \emph{matricization} of $\bm{\mcx}$ is a matrix denoted by $\mbfx_{(n)}$ whose columns are mode-$n$ fibers of $\bm{\mcx}$ in the lexicographical order. 
\item The mode-$n$ product of $\bm{\mcx}\in\mbr^{I_1\times\cdots\times I_N}$ with $\mbfa\in\mbr^{J\times I_n}$ is written as $\bm{\mcx}\times_n\mbfa\in\mbr^{I_1\times \cdots \times I_{n-1}\times J\times I_{n+1}\times \cdots\times I_N}$, defined component-wisely by
\begin{equation*}\label{eq:tm}
(\bm{\mcx}\times_n \mbfa)_{i_1\cdots i_{n-1}ji_{n+1}\cdots i_N}=\sum_{i_n=1}^{I_n}x_{i_1i_2\cdots i_N}a_{ji_n}.
\end{equation*} 
\item The \emph{inner product} of $\bm{\mca},\bm{\mcb}\in\mbr^{I_1\times\cdots\times I_N}$ is
$\langle\bm{\mca},\bm{\mcb}\rangle\triangleq\sum_{i_1,\cdots,i_N}a_{i_1\cdots i_N}b_{i_1\cdots i_N}.$
The Frobenious norm of $\bm{\mcx}$ is 
$\|\bm{\mcx}\|_F\triangleq\sqrt{\langle\bm{\mcx},\bm{\mcx}\rangle}.$
\item Given $\bm{\mcm}\in\mbr^{I_1\times\cdots\times I_N}$, the \emph{Tucker decomposition} of $\bm{\mcm}$ is to find a core tensor $\bm{\mcc}\in\mbr^{R_1\times\cdots\times R_N}$ with $R_n\le I_n,\forall n$ and $N$ factor matrices $\mbfa_n\in\mbr^{I_n\times R_n},n=1,\cdots,N$ such that \begin{equation}\label{eq:tucker}
\bm{\mcm}\approx\bm{\mcc}\times_1\mbfa_1\cdots\times_N\mbfa_N.
\end{equation} 
\end{itemize}
 
It is not difficult to verify that
if $\bm{\mcx}=\bm{\mcc}\times_1\mbfa_1\cdots\times_N\mbfa_N$, then 
\begin{equation}\label{eq:vec}
\vc(\bm{\mcx})=\big(\otimes_{n=N}^1\mbfa_n\big)\vc(\bm{\mcc}),
\end{equation}
where 
\begin{equation}\label{eq:kron}
\otimes_{n=N}^1\mbfa_n\triangleq\mbfa_N\otimes\cdots\otimes\mbfa_1,
\end{equation}
 and $\mbfa\otimes\mbfb$ denotes the Kronecker product of $\mbfa$ and $\mbfb$. In addition,
\begin{equation}\label{eq:mat}\mbfx_{(n)}=\mbfa_n\mbfc_{(n)}\left(\otimes_{\substack{i=N\\i\neq n}}^1\mbfa_i\right)^\top.
\end{equation}

\subsection{Contributions}
We apply and improve the APG method proposed in \cite{xublock} to the sparse NTD problem
\begin{equation}\label{eq:spntd}
\begin{array}{l}
\underset{\bm{\mcc},\mbfa}{\min}\ F(\bm{\mcc},
\mbfa)\equiv\ell(\bm{\mcc},
\mbfa)+\lambda_c\|\bm{\mcc}\|_1+\overset{N}{\underset{n=1}{\sum}}\lambda_n\|\mbfa_n\|_1,\\[0.2cm]
\st \bm{\mcc}\in\mbr^{R_1\times\cdots\times R_N}_+, \mbfa_n\in\mbr^{I_n\times R_n}_+, ~n = 1,\cdots,N,
\end{array}
\end{equation}
where $\mbr^{I_n\times R_n}_+$ contains all $I_n\times R_n$ matrices with nonnegative components, $\mbfa$ denotes $(\mbfa_1,\cdots,\mbfa_N)$, $$\ell(\bm{\mcc},
\mbfa)=\frac{1}{2}\|\bm{\mcc}\times_1\mbfa_1\cdots\times_N\mbfa_N-\bm{\mcm}\|_F^2$$
is a data fitting term that measures the approximation in \eqref{eq:tucker}, $\bm{\mcm}\in\mbr^{I_1\times\cdots\times I_N}_+$ is a given tensor, $\|\bm{\mcc}\|_1\triangleq\sum_{i_1,\cdots,i_N}|c^{}_{i_1\cdots i_N}|$ is used to promote the sparsity of $\bm{\mcc}$, and $\lambda_c,\lambda_1,\cdots,\lambda_N$ are parameters balancing the data fitting and sparsity level. 

Our algorithm iteratively updates the core tensor $\bm{\mcc}$ and factor matrices alternatingly in the order of $\bm{\mcc},\mbfa_1,\bm{\mcc},\mbfa_2,\cdots,\bm{\mcc},\mbfa_N$. We analyze the algorithm's per-iteration complexity and give its global convergence. The algorithm is modified to sparse NTD with missing values. We also consider some extensions of NTD including sparse higher-order principal component analysis \cite{allen2012sparse}. Our algorithm is carefully implemented in MATLAB and compared to some state-of-the-art methods for solving (sparse) NTD from partial and/or full observations on both synthetic and real world data. Numerical results show that the proposed algorithm makes superior performance over all the compared ones in almost all cases.

\subsection{Outline} The rest of the paper is organized as follows. Section \ref{sec:spntd} applies APG to sparse NTD problem. The algorithm is modified for sparse NTD with missing values in section \ref{sec:spntdc}, and some extensions are considered in section \ref{sec:hopca}. Numerical results are shown in section \ref{sec:numerical}. Finally, section \ref{sec:discussion} concludes the paper.

\section{Sparse nonnegative Tucker decomposition}\label{sec:spntd}

\subsection{Bound constraints for well-definedness} 
Note that for any positive scalars $s_c,s_1,\cdots,s_N$ such that their product equals \emph{one}, $(s_c\bm{\mcc}, s_1\mbfa_1, \cdots, s_N\mbfa_N)$ does not change the value of $\ell$. Hence, if some $\lambda$'s vanish, the corresponding variables would be unbounded such that the variables with positive $\lambda$'s would approach to \emph{zero}, and \eqref{eq:spntd} may not admit a solution. To tackle this problem, if $\lambda_n=0$, we add 
\begin{equation}\label{eq:con-a}
\mbfa_n\le \max(1,\|\bm{\mcm}\|_\infty)
\end{equation} to bound $\mbfa_n$, where $\|\bm{\mcm}\|_\infty$ denotes the maximum component of $\bm{\mcm}$. If $\lambda_c=0$, we add
\begin{equation}\label{eq:con-c}
\bm{\mcc}\le \max(1,\|\bm{\mcm}\|_\infty)
\end{equation}
to bound $\bm{\mcc}$. The constraints in \eqref{eq:con-a} and \eqref{eq:con-c} are reasonable according to the following proposition, which is not difficult to show.

\begin{proposition}\label{prop:con}
If $\bm{\mcm}=\tilde{\bm{\mcc}}\times_1\tilde{\mbfa}_1\cdots\times_N\tilde{\mbfa}_N$ for some $(\tilde{\bm{\mcc}},\tilde{\mbfa}_1,\cdots,\tilde{\mbfa}_N)$, then there exists some $({\bm{\mcc}},{\mbfa}_1,\cdots,{\mbfa}_N)$ satisfying \eqref{eq:con-a} and \eqref{eq:con-c} such that $\bm{\mcm}={\bm{\mcc}}\times{\mbfa}_1\cdots\times_N{\mbfa}_N$ and $({\bm{\mcc}},{\mbfa}_1,\cdots,{\mbfa}_N)$ has the same sparsity as that of  $(\tilde{\bm{\mcc}},\tilde{\mbfa}_1,\cdots,\tilde{\mbfa}_N)$.
\end{proposition}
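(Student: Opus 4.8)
The plan is to exploit the scaling (gauge) freedom of Tucker decomposition to rescale the given factors so that all of them, together with the core, are bounded by $B:=\max(1,\|\bm{\mcm}\|_\infty)$; since such a uniform bound implies both \eqref{eq:con-a} for every $n$ and \eqref{eq:con-c}, it is enough no matter which $\lambda$'s actually vanish. The global scaling $(\tilde{\bm{\mcc}},\tilde{\mbfa}_1,\ldots,\tilde{\mbfa}_N)\mapsto(s_c\tilde{\bm{\mcc}},s_1\tilde{\mbfa}_1,\ldots,s_N\tilde{\mbfa}_N)$ with $s_c\prod_n s_n=1$ noted above is \emph{not} sufficient: it leaves the product of the largest entries of the $N+1$ objects invariant, and simple diagonal examples make this product exceed $B^{N+1}$, which already rules out all entries being $\le B$. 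Instead I would use the finer per-mode \emph{positive diagonal} gauge: for any invertible diagonal $\mbfd_n$,
\[
\tilde{\bm{\mcc}}\times_n\tilde{\mbfa}_n=(\tilde{\bm{\mcc}}\times_n\mbfd_n^{-1})\times_n(\tilde{\mbfa}_n\mbfd_n),
\]
which follows from the component definition of the mode-$n$ product; applied within \eqref{eq:tucker} it leaves $\bm{\mcm}$ unchanged, and because $\mbfd_n$ has positive diagonal it preserves the zero/nonzero pattern of both the column-scaled factor and the compensating core.

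Next I would normalize. For each mode $n$ and each nonzero column $j$ of $\tilde{\mbfa}_n$, let $\alpha_{n,j}>0$ be its largest entry and divide that column by $\alpha_{n,j}$ (equivalently take $\mbfd_n=\diagg(\alpha_{n,1},\ldots,\alpha_{n,R_n})$, with the convention $\alpha_{n,j}=1$ on any zero column so that $\mbfd_n$ stays invertible). The resulting factors $\mbfa_n$ have all entries in $[0,1]\subseteq[0,B]$, so \eqref{eq:con-a} holds; the new core has entries $c_{j_1\cdots j_N}=\tilde c_{j_1\cdots j_N}\prod_n\alpha_{n,j_n}\ge0$ and the same support as $\tilde{\bm{\mcc}}$.

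The core bound is the crux. Fix an index $(j_1,\ldots,j_N)$ with $c_{j_1\cdots j_N}\neq0$ all of whose coupled columns are nonzero, and for each $n$ choose $i_n^0$ attaining the normalized column maximum, so $a^{(n)}_{i_n^0,j_n}=1$. Expanding $m_{i_1^0\cdots i_N^0}=\sum_{j_1',\ldots,j_N'}c_{j_1'\cdots j_N'}\prod_n a^{(n)}_{i_n^0,j_n'}$ and discarding every summand except the one at $(j_1,\ldots,j_N)$---legitimate because every factor of every summand is nonnegative---gives $c_{j_1\cdots j_N}\le m_{i_1^0\cdots i_N^0}\le\|\bm{\mcm}\|_\infty\le B$, which is exactly \eqref{eq:con-c}.

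The one wrinkle I expect to be the main obstacle is a core index $(j_1,\ldots,j_N)$ with $\tilde c_{j_1\cdots j_N}\neq0$ but with some coupled column $j_n$ of $\tilde{\mbfa}_n$ identically zero, since then no column maximum equal to $1$ exists and the argmax argument breaks. The resolution is that such an entry is multiplied by a zero factor in \emph{every} term of the reconstruction, hence contributes nothing to $\bm{\mcm}$; I would simply reset it to any value in $(0,B]$, which keeps it nonzero (preserving the sparsity of $\bm{\mcc}$), respects the bound, and leaves $\bm{\mcm}$ unchanged. Thus the only genuine idea is recognizing that plain global scaling is too rigid and that the column-wise diagonal gauge together with nonnegativity is precisely what forces the core entries below $\|\bm{\mcm}\|_\infty$, with the zero-column phantoms handled by the elementary observation above.
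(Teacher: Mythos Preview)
Your proof is correct and takes essentially the same approach as the paper: rescale each factor column by its maximum via a positive diagonal gauge, then use nonnegativity to bound the core entries by $\|\bm{\mcm}\|_\infty$ through the argmax indices. The paper organizes this iteratively---normalizing one mode at a time and showing the remaining partial product $\tilde{\bm{\mcc}}_n\times_1\tilde{\mbfa}_1\cdots\times_{n-1}\tilde{\mbfa}_{n-1}$ stays $\le 1$---whereas you normalize all modes at once and bound each core entry directly; your explicit handling of zero columns corresponds to the paper's ``without loss of generality'' assumption that none exist.
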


\begin{remark}\label{rm:bound-con}
If $\tilde{\bm{\mcc}}\times_1\tilde{\mbfa}_1\cdots\times_N\tilde{\mbfa}_N$ is not exactly equal but close to $\bm{\mcm}$, one can magnify the bounds in \eqref{eq:con-a} and \eqref{eq:con-c} by multiplying some $\tau>1$.
\end{remark}

\subsection{APG for sparse NTD}
For convenience, we assume all $\lambda$'s to be positive in the derivation of our algorithm, so there are no constraints as in \eqref{eq:con-a} and \eqref{eq:con-c} present. Our algorithm is based on the APG method proposed in \cite{xublock}.
Suppose the current iterate is $(\tilde{\bm{\mcc}},\tilde{\mbfa})$.
We update $\bm{\mcc}$ by
\begin{align}
\bm{\mcc}_{\mathrm{new}}&=\argmin_{\bm{\mcc}\ge0}\langle\nabla_{\bm{\mcc}}\ell(\hat{\bm{\mcc}},
\tilde{\mbfa}),\bm{\mcc}-\hat{\bm{\mcc}}\rangle+\frac{L_c}{2}\|\bm{\mcc}-\hat{\bm{\mcc}}\|_F^2+\lambda_c\|\bm{\mcc}\|_1,\label{core-prob}\\
&= \max\left(0,\hat{\bm{\mcc}}-\frac{1}{L_c}\nabla_{\bm{\mcc}}\ell(\hat{\bm{\mcc}},
\tilde{\mbfa})-\frac{\lambda_c}{L_c}\right),\label{eq:core}
\end{align}
where $L_c$ is a Lipschitz constant of $\nabla_{\bm{\mcc}}\ell(\bm{\mcc},
\tilde{\mbfa})$ with respect to $\bm{\mcc}$, namely,
$$\|\nabla_{\bm{\mcc}}\ell(\bm{\mcc}_1,
\tilde{\mbfa})-\nabla_{\bm{\mcc}}\ell(\bm{\mcc}_2,
\tilde{\mbfa})\|_F\le L_c\|\bm{\mcc}_1-\bm{\mcc}_2\|_F,\ \forall ~\bm{\mcc}_1, \bm{\mcc}_2,$$
and $\hat{\bm{\mcc}}$ is an extrapolated point.
Similarly, if the current iterate is $(\tilde{\bm{\mcc}},\tilde{\mbfa})$, a factor matrix $\mbfa_n$ is updated by
\begin{align}
(\mbfa_n)_{\mathrm{new}}=&\argmin_{\mbfa_n\ge0}\langle\nabla_{\mbfa_n}\ell(\tilde{\bm{\mcc}},
\tilde{\mbfa}_{j<n},\hat{\mbfa}_n,\tilde{\mbfa}_{j>n}),\mbfa_n-\hat{\mbfa}_n\rangle\label{factor-prob}\\
& \hspace{0.3cm}+\frac{L_n}{2}\|\mbfa_n-\hat{\mbfa}_n\|_F^2+\lambda_n\|\mbfa_n\|_1,\cr
=& \max\left(0,\hat{\mbfa}_n-\frac{1}{L_n}\nabla_{\mbfa_n}\ell(\tilde{\bm{\mcc}},
\tilde{\mbfa}_{j<n},\hat{\mbfa}_n,\tilde{\mbfa}_{j>n})-\frac{\lambda_n}{L_n}\right),\label{eq:factor}
\end{align}
where $L_n$ is a Lipschitz constant of $\nabla_{\mbfa_n}\ell(\tilde{\bm{\mcc}},
\tilde{\mbfa}_{j<n},{\mbfa}_n,\tilde{\mbfa}_{j>n})$ with respect to $\mbfa_n$, and $\hat{\mbfa}_n$ is an extrapolated point.

One can perform \eqref{eq:core} and \eqref{eq:factor} to update $\bm{\mcc}$ and $\mbfa$ in different manners. Directly applying the APG method proposed in \cite{xublock} leads to the order of $\bm{\mcc},\mbfa_1,\cdots,\mbfa_N$. However, since the core tensor $\bm{\mcc}$ interacts with all $\mbfa_n$'s, updating it more frequently is expected to speed up the convergence of the algorithm. Hence, a more efficient way would be to update the variables in the order of $\bm{\mcc},\mbfa_1,\bm{\mcc},\mbfa_2,\cdots,\bm{\mcc},\mbfa_N$. Figure \ref{fig:comp_2orders} shows the convergence behavior of APG with two different updating orders on a synthetic tensor and the Swimmer dataset \cite{donoho2003}. From the figure, we see that APG with the updating order $\bm{\mcc},\mbfa_1,\cdots,\mbfa_N$ performs comparably well as that with the order $\bm{\mcc},\mbfa_1,\bm{\mcc},\mbfa_2,\cdots,\bm{\mcc},\mbfa_N$ on the randomly generated data. However, the former behaves much worse than the latter on the Swimmer dataset. For this reason, we only consider the latter one, whose pseudocode is shown in Algorithm \ref{alg:apg}.

\begin{figure}\caption{Results by APG with two different orders of updating the core tensor and factor matrices. (a). APG on a Gaussian random $20\times20\times20\times20$ tensor $\bm{\mcm}$ with core size $5\times5\times5\times5$; (b). APG on the $32\times32\times256$ Swimmer dataset \cite{donoho2003} with core size $24\times20\times20$.}
\label{fig:comp_2orders}
\begin{center}
\subfigure[random dataset]{
\begin{minipage}[t]{0.35\textwidth}
\includegraphics[width=0.99\textwidth]{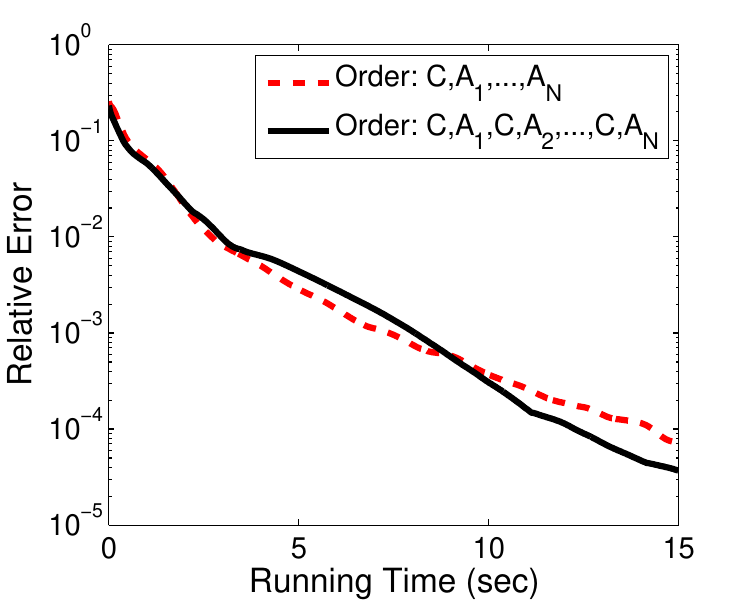}
\end{minipage}
}
\subfigure[Swimmer dataset]{
\begin{minipage}[t]{0.35\textwidth}
\includegraphics[width=0.99\textwidth]{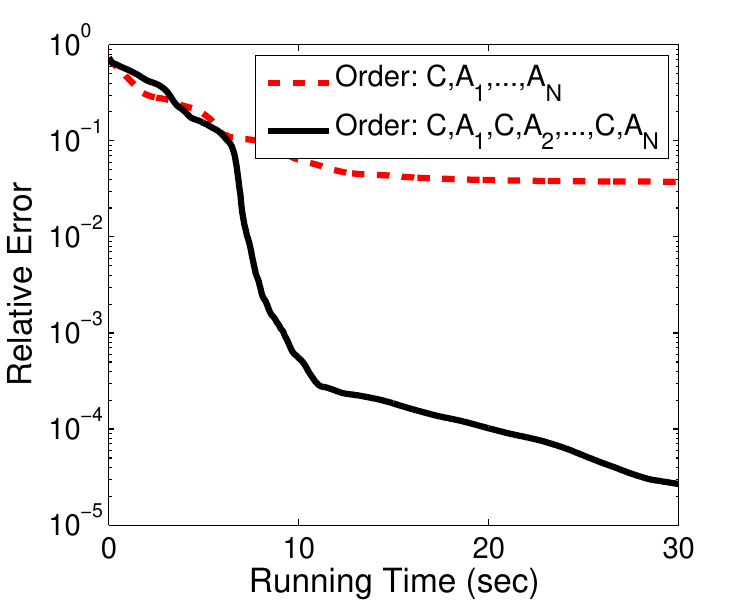}
\end{minipage}
}
\end{center}
\end{figure}

\begin{algorithm}\caption{Alternating proximal gradient for sparse NTD}\label{alg:apg}
\DontPrintSemicolon
{\small 
\KwData{tensor $\bm{\mcm}$, core dimension $(R_1,\cdots,R_N)$, parameters $\lambda_c,\lambda_1,\cdots,\lambda_N\ge 0$, and $(\bm{\mcc}^{-1},\mbfa^{-1})=(\bm{\mcc}^{0},\mbfa^{0})$.}
\For{$k=1,2,\cdots$}{
Set $\bm{\mcc}^{k,-1}=\bm{\mcc}^{k,0}=\bm{\mcc}^0$ if $k=1$ and $\bm{\mcc}^{k,-1}=\bm{\mcc}^{k-1,N-1},\ \bm{\mcc}^{k,0}=\bm{\mcc}^{k-1,N}$ otherwise.\;
\For{$n=1,\cdots,N$}{
Choose $L_c^{k,n}$ to be a Lipschitz constant of $\nabla_{\bm{\mcc}}\ell(\bm{\mcc},\mbfa_{j<n}^k,\mbfa_{j\ge n}^{k-1})$ about $\bm{\mcc}$.\;
Choose $\omega_c^{k,n}\ge0$ and set $\hat{\bm{\mcc}}^{k,n}={\bm{\mcc}}^{k,n-1}+\omega_c^{k,n}({\bm{\mcc}}^{k,n-1}-{\bm{\mcc}}^{k,n-2})$.\;
Update $\bm{\mcc}$ by
\begin{equation}\label{update-c}
{\bm{\mcc}}^{k,n}= \max\left(0,\hat{\bm{\mcc}}^{k,n}-\frac{1}{L_c^{k,n}}\nabla_{\bm{\mcc}}\ell(\hat{\bm{\mcc}}^{k,n},
\mbfa_{j<n}^k,\mbfa_{j\ge n}^{k-1})-\frac{\lambda_c}{L_c^{k,n}}\right);
\end{equation}
Choose $L_n^k$ to be a Lipschitz constant of $\nabla_{\mbfa_n}\ell(\bm{\mcc}^{k,n},\mbfa_{j<n}^k,\mbfa_n,\mbfa_{j< n}^{k-1})$ about $\mbfa_n$.\;
Choose $\omega_n^{k}\ge0$ and set $\hat{\mbfa}_n^k=\mbfa_n^{k-1}+\omega_n^k(\mbfa_n^{k-1}-\mbfa_n^{k-2})$.\;
Update $\mbfa_n$ by 
\begin{equation}\label{update-a}
\mbfa_n^k=\max\left(0,\hat{\mbfa}_n^k-\frac{1}{L_n^k}\nabla_{\mbfa_n}\ell(\bm{\mcc}^{k,n},
\mbfa_{j<n}^k,\hat{\mbfa}_n^k,\mbfa_{j>n}^{k-1})-\frac{\lambda_n}{L_n^k}\right).
\end{equation}
\If{$F({\bm{\mcc}}^{k,n},\mbfa_{j\le n}^k, \mbfa_{j>n}^{k-1})>F({\bm{\mcc}}^{k,n-1},\mbfa_{j<n}^k, \mbfa_{j\ge n}^{k-1})$}
{
\lnlset{reupdate}{ReDo}Re-update ${\bm{\mcc}}^{k,n}$ and $\mbfa_n^k$ by \eqref{update-c} and \eqref{update-a} with $\hat{\bm{\mcc}}^{k,n}={\bm{\mcc}}^{k,n-1}$ and $\hat{\mbfa}_n^k=\mbfa_n^{k-1}$, respectively.\;
}
} 
Set $\bm{\mcc}^k=\bm{\mcc}^{k,N}$.\;
\If{Some stopping conditions are satisfied}{
Output $(\bm{\mcc}^k,\mbfa_1^k,\cdots,\mbfa_N^k)$ and stop.
}
}
}
\end{algorithm}

\begin{remark}\label{rm:alg}
We do re-update in Line \textbf{\ref{reupdate}} to make the objective nonincreasing. The monotonicity of the objective is important since the algorithm may perform unstably without the re-update. The computational cost of one objective evaluation is much cheaper than, actually not in the same order as, one gradient
computation. Detailed complexity analysis is listed in Appendix \ref{sec:complexity}. Moreover, in each one of our experiments, the re-update occurs only a few times (often less than 10), so it needs only a little more computations.

If some $\lambda_n$ and/or $\lambda_c$ vanish, we further do projections 
\begin{equation}\label{proj-c}
\bm{\mcc}^{k,n}=\min\left(\max(1,\|\bm{\mcm}\|_\infty),\bm{\mcc}^{k,n}\right)
\end{equation} 
after \eqref{update-c} and 
\begin{equation}\label{proj-a}
\mbfa_n^k=\min\left(\max(1,\|\bm{\mcm}\|_\infty),\mbfa_n^k\right)
\end{equation} 
after \eqref{update-a}. Omitting the superscript, it is easy to show that \eqref{proj-c} and \eqref{proj-a} respectively solve \eqref{core-prob} and \eqref{factor-prob} with the extra constraints \eqref{eq:con-c} and \eqref{eq:con-a}.
\end{remark}

\subsection{Parameter settings}\label{sec:param-set} 
In our implementation of Algorithm \ref{alg:apg}, we set 
{\small$$L_c^{k,n}=\max\big(1, \left\|(\mbfa_N^{k-1})^\top \mbfa_N^{k-1}\otimes\cdots\otimes (\mbfa_n^{k-1})^\top \mbfa_n^{k-1}\otimes (\mbfa_{n-1}^{k})^\top \mbfa_{n-1}^{k}\otimes\cdots\otimes(\mbfa_{1}^{k})^\top \mbfa_{1}^{k}\right\|\big),$$}where $\|\cdot\|$ denotes matrix operator norm. Note that computing $L_c^{k,n}$ does not need to form the expensive Kronecker product because 
\begin{equation*}
\left\|\mbfa_N^\top \mbfa_N\otimes\cdots\otimes\mbfa_{1}^\top \mbfa_{1}\right\|
=\prod_{i=1}^N\left\|\mbfa_{i}^\top \mbfa_{i}\right\|.
\end{equation*}
In the same way, we set  
\begin{equation}\label{eq:La}
L_n^k=\max\big(1, \|\mbfb_n^k(\mbfb_n^k)^\top\|\big),
\end{equation}
where
 \begin{equation}\label{eq-b}
\mbfb_n^k=\mbfc_{(n)}^{k,n}\left(\mbfa_N^{k-1}\otimes\cdots\otimes\mbfa_{n+1}^{k-1}
\otimes\mbfa_{n-1}^k\otimes\cdots\otimes\mbfa_1^k\right)^\top.\end{equation}

In addition, we take
\begin{equation}\label{eq:weightc}
\omega_c^{k,n}=\min\left(\hat{\omega}_c^{k,n},0.9999\sqrt{\frac{L_c^{k,n-1}}{L_c^{k,n}}}\right),
\end{equation}
where $\hat{\omega}_c^{k,n}$ follows
\begin{subequations}\label{eq:fista-c}
\begin{align}
&\hat{\omega}_c^{k,n}=\frac{t^{k,n-1}-1}{t^{k,n}},\label{eq:fista-c1}\\
&t^{1,0}_c=1,\  t^{k,0}_c = t^{k-1,N}_c, \text{ for }k\ge 2, \\
& t^{k,n}_c=\frac{1}{2}\left(1+\sqrt{1+4(t^{k,n-1}_c)^2}\right), \text{ for }k\ge 1, n =1,\cdots,N.
\end{align}
\end{subequations}
In the same way,
\begin{equation}\label{eq:weighta}
\omega_n^{k}=\min\left(\hat{\omega}^{k},0.9999\sqrt{\frac{L_n^{k-1}}{L_n^{k}}}\right),
\end{equation}
where $\hat{\omega}^{k}$ follows
\begin{subequations}\label{eq:fista-a}
\begin{align}
&\hat{\omega}^{k}=\frac{t^{k-1}-1}{t^{k}},\label{eq:fista-a1}\\
&t^{0}=1,\  t^{k}=\frac{1}{2}\left(1+\sqrt{1+4(t^{k-1})^2}\right), \text{ for }k\ge 1.
\end{align}
\end{subequations}

\begin{remark}We perform ``min'' operation in \eqref{eq:weightc} and \eqref{eq:weighta} for convergence; see Theorem \ref{thm:convg}. The weights $\hat{\omega}_c^{k,n}$ in \eqref{eq:fista-c} and $\hat{\omega}^k$ in \eqref{eq:fista-a} are the same as that used in \cite{BeckTeboulle2009} for convex problems. Numerically, we observe that the extrapolation technique using the weights given in \eqref{eq:weightc} and \eqref{eq:weighta} can significantly speed up our algorithm. We also tested APG with the dynamically updated weight used in \cite{wen2012solving, Ling-Xu-Yin-mtx-conf-11} for non-convex matrix completion problem and observed that APG performs as well as that with the above extrapolation weights.
\end{remark}

\subsection{Per-iteration complexity} 
Suppose $\bm{\mcm}\in\mathbb{R}^{I_1\times\ldots\times I_N}$ and the core tensor $\bm{\mcc}\in\mathbb{R}^{R_1\times\ldots\times R_N}$. Then the per-iteration cost of Algorithm \ref{alg:apg} is roughly
\begin{equation}\label{eq:iter-cost}
N\cdot\mathcal{O}\left(\sum_{j=1}^N\big(\prod_{i=1}^j R_i\big)\big(\prod_{i=j}^N I_i\big)+\sum_{j=1}^N\big(\prod_{i=1}^j I_i\big)\big(\prod_{i=j}^N R_i\big)\right).
\end{equation}
The detailed analysis is given in Appendix \ref{sec:complexity}.

\begin{remark}
 If $N=\mathcal{O}(1)$ and $\max_nR_n\le \mathcal{O}(\log\prod_{i=1}^NI_i)$, then the per-iteration cost of Algorithm \ref{alg:apg} is scalable\footnote{Here, by scalability, we mean the cost is no greater than $s\cdot\log(s)$ if the data size is $s$.} about the data size $\prod_{i=1}^NI_i$. 
\end{remark}

\subsection{Convergence results}\label{sec:convg}

It is shown in \cite{xublock} that the APG method with cyclic block updating rule has global convergence to a stationary point. Since Algorithm \ref{alg:apg} uses a different block updating order, its convergence cannot be directly obtained from \cite{xublock}. However, we can still obtain the global convergence\footnote{Since the problem is non-convex, we only get convergence to a stationary point, and different starting points can produce different limit points.}, which is summarized in Theorem \ref{thm:convg}. Although the proof idea for Theorem \ref{thm:convg} is similar to that in \cite{xublock}, some places need careful modifications. Hence, for completeness, we include a modified proof in the Appendix. 

\begin{theorem}\label{thm:convg}
Let $\big\{\bm{\mcw}^k\triangleq(\bm{\mcc}^k,\mbfa^k)\big\}$ be the sequence generated by Algorithm \ref{alg:apg}. If $\lambda_c,\lambda_1,\cdots,\lambda_N$ are all positive, and
\begin{enumerate}
\item There exist positive constants $L_d, L_u$ such that $L_c^{k,n}, L_n^k \in [L_d, L_u]$;  
\item There is a positive constant $\delta_\omega<1$ such that $\omega_c^{k,n}\le\delta_\omega\sqrt{\frac{L_c^{k,n-1}}{L_c^{k,n}}}$ and $\omega_n^k\le\delta_\omega\sqrt{\frac{L_n^{k-1}}{L_n^k}}$ for all $n$ and $k$, where we use the notation $L_c^{k,0}=L_c^{k-1,N}$;
\end{enumerate}
 then $\bm{\mcw}^k$ converges to a stationary point $\bar{\bm{\mcw}}$ of \eqref{eq:spntd}. 
\end{theorem}

\begin{remark}\label{rm:convg}
Positivity of sparse parameters implies the boundedness of $\{\bm{\mcw}^k\}$, and thus the existence of $L_d$ and $L_u$ can be guaranteed if $L_c^{k,n}$ and $L_n^k$ are taken as in section \ref{sec:param-set}.
\end{remark}

\section{Sparse nonnegative Tucker decomposition with missing values}\label{sec:spntdc}
For some applications, $\bm{\mcm}$ may not be fully observed. This section modifies Algorithm \ref{alg:apg} to handle this case. The problem is formulated as
{\small\begin{equation}\label{eq:spntdc}
\begin{array}{l}
\underset{\bm{\mcc},\mbfa}{\min}\ F_\Omega(\bm{\mcc},\mbfa)\equiv\frac{1}{2}\|\mcp_\Omega(\bm{\mcc}\times_1\mbfa_1\cdots\times_N\mbfa_N-\bm{\mcm})\|_F^2+\lambda_c\|\bm{\mcc}\|_1+\overset{N}{\underset{n=1}{\sum}}\lambda_n\|\mbfa_n\|_1,\\[0.2cm]
\st \bm{\mcc}\in\mbr^{R_1\times\cdots\times R_N}_+, \mbfa_n\in\mbr^{I_n\times R_n}_+, ~n = 1,\cdots,N,
\end{array}
\end{equation}
}
where $\Omega$ indexes the observed entries of $\bm{\mcm}$, and $\mcp_\Omega(\bm{\mca})$ keeps the entries of $\bm{\mca}$ in $\Omega$ and zeros out all others. As did in \cite{wen2012solving, Xu-Yin-Wen-Zhang-11}, we introduce variable $\bm{\mcx}$, restrict $\mcp_\Omega(\bm{\mcx})=\mcp_\Omega(\bm{\mcm})$, and write \eqref{eq:spntdc} equivalently to 
{\small\begin{equation}\label{eq:spntdc1}
\begin{array}{cl}
\underset{\bm{\mcc},\mbfa,\bm{\mcx}}{\min}& \frac{1}{2}\|\bm{\mcc}\times_1\mbfa_1\cdots\times_N\mbfa_N-\bm{\mcx}\|_F^2+\lambda_c\|\bm{\mcc}\|_1+\overset{N}{\underset{n=1}{\sum}}\lambda_n\|\mbfa_n\|_1,\\[0.2cm]
\st& \bm{\mcc}\in\mbr^{R_1\times\cdots\times R_N}_+, \mbfa_n\in\mbr^{I_n\times R_n}_+, ~n = 1,\cdots,N,~ \mcp_\Omega(\bm{\mcx})=\mcp_\Omega(\bm{\mcm}).
\end{array}
\end{equation}}

To modify Algorithm \ref{alg:apg} for \eqref{eq:spntdc} or equivalently \eqref{eq:spntdc1}, we set $\bm{\mcx}^0=\mcp_\Omega(\bm{\mcm})$ in the beginning. At the $k$-th iteration of Algorithm \ref{alg:apg}, we use $\bm{\mcm}=\bm{\mcx}^{k-1}$, wherever $\bm{\mcm}$ is referred to. 
After Line \textbf{\ref{reupdate}} of Algorithm \ref{alg:apg}, update $\bm{\mcx}$ by
\begin{equation}\label{update-x}
\bm{\mcx}^k=\mcp_\Omega(\bm{\mcm})+\mcp_{\Omega^c}(\bm{\mcc}^k\times_1\mbfa_1^k\cdots\times_N\mbfa_N^k).
\end{equation}

Compared to Algorithm \ref{alg:apg}, the modified method needs extra computation for the update \eqref{update-x}, which costs about 
$2\sum_{j=1}^N\big(\prod_{i=1}^jI_i\big)\big(\prod_{i=j}^NR_i\big).$
Therefore, the per-iteration complexity of the modified algorithm is still scalable about the data size if $N=\mathcal{O}(1)$ and $\max_nR_n\le \mathcal{O}(\log\prod_{i=1}^NI_i)$. In addition, following the proof of Theorem \ref{thm:convg}, one can show that the same convergence result holds for the modified algorithm. 

\section{Extensions}\label{sec:hopca}
For some applications, the core tensor $\bm{\mcc}$ may not be required nonnegative \cite{ding2010convex}. Algorithm \ref{alg:apg} can be modified to handle this case by changing \eqref{update-c} to
\begin{equation}\label{shrink-c}{\bm{\mcc}}^{k,n}= \mcs_{\frac{\lambda_c}{L_c^{k,n}}}\left(\hat{\bm{\mcc}}^{k,n}-\frac{1}{L_c^{k,n}}\nabla_{\bm{\mcc}}\ell(\hat{\bm{\mcc}}^{k,n},
\mbfa_{j<n}^k,\mbfa_{j\ge n}^{k-1})\right),\end{equation}
where $\mcs_\mu(\bm{\mcx})$ is a soft-thresholding operator defined component-wisely as $$\mcs_\mu(x)=\sign(x)\cdot\max(0,|x|-\mu).$$

The APG method can also be adapted to solve sparse higher-order principal component analysis (HOPCA), which imposes orthogonality constraint on each factor matrix. The problem is formulated as
\begin{equation}\label{eq:hopca}
\begin{array}{l}
\underset{\bm{\mcc},\mbfa}{\min}~\frac{1}{2}\|\bm{\mcc}\times_1\mbfa_1\cdots\times_N\mbfa_N-\bm{\mcm}\|_F^2+\lambda_c\|\bm{\mcc}\|_1+\overset{N}{\underset{n=1}{\sum}}\lambda_n\|\mbfa_n\|_1,\\
\st \mbfa_n^\top\mbfa_n=\mbfi_n,~n=1,\cdots,N,
\end{array}
\end{equation}
where $\mbfi_n$ is an identity matrix of appropriate size. When $\lambda_c=0$, the optimal $\bm{\mcc}=\bm{\mcm}\times_1\mbfa_1^\top\cdots\times_N\mbfa_N$, and one can eliminate $\bm{\mcc}$ as shown in \cite{kolda2009tensor}. The concurrency of sparsity and orthogonality constraints makes the problem much more difficult. The work \cite{allen2012sparse} considers rank-1 factor matrix with only one column and relaxes the orthogonality constraint to $\mbfa_n^\top\mbfa_n\le 1$. Then it applies block coordinate minimization method to solve the relaxed problem. When some $\mbfa_n$ has more than one columns, we relax \eqref{eq:hopca} to
{\small
\begin{equation}\label{eq:rhopca}
\hspace{-0.1cm}\begin{array}{l}
\underset{\bm{\mcc},\mbfa}{\min}~\frac{1}{2}\|\bm{\mcc}\times_1\mbfa_1\cdots\times_N\mbfa_N-\bm{\mcm}\|_F^2+\lambda_c\|\bm{\mcc}\|_1+\overset{N}{\underset{n=1}{\sum}}\lambda_n\|\mbfa_n\|_1+\frac{\mu}{2}\overset{N}{\underset{n=1}{\sum}}\underset{i\neq j}{\sum}\left(\bfa_{n,i}^\top\bfa_{n,j}\right)^2\\
\st \|\bfa_{n,j}\|_2\le 1,~n=1,\cdots,N, \forall j,
\end{array}\hspace{-0.3cm}
\end{equation}
}where $\bfa_{n,j}$ denotes the $j$-th column of $\mbfa_n$, $\sum_{i\neq j}\left(\bfa_{n,i}^\top\bfa_{n,j}\right)^2$ is used to promote the orthogonality of $\mbfa_n$, and $\mu$ is a penalty parameter. We want to mention that our orthogonality regularization term is similar to that used in \cite{ramirez2010classification} for promoting the discrepancy of dictionaries and also that used on pp. 222 of \cite{cichocki2009nonnegative}.

Our method for  \eqref{eq:rhopca} is similar to Algorithm \ref{alg:apg} and cycles over the variables by $\bm{\mcc},\mbfa_1,\bm{\mcc},\mbfa_2,\cdots,\bm{\mcc},\mbfa_N$. The update of $\bm{\mcc}$ is done by \eqref{shrink-c}, and $\mbfa_n$ is updated one column by one column. Specifically, assume the current iterate is $(\bm{\mcc}^{k,n},\mbfa_{i<n}^k,\mbfa_{i\ge n}^{k-1})$. Let $\mbfb_n^k$ be the one obtained from \eqref{eq-b}. Using \eqref{eq-ell},
we update the columns of $\mbfa_n$ from $j=1$ to $R_n$ by 
\begin{align}
\bfa_{n,j}^k=\argmin_{\|\bfa_{n,j}\|_2\le 1}&~
\frac{1}{2}\big\|\bfa_{n,j}\bfb_n^{k,j}+(\tilde{\mbfa}_n^k)_{j^c}(\mbfb_n^k)^{j^c}-\mbfm_{(n)}\big\|_F^2+\lambda_n\|\bfa_{n,j}\|_1\label{an-j}\\[-0.3cm]
&+\mu\left(\big\langle (\tilde{\mbfa}_n^k)_{j^c}(\tilde{\mbfa}_n^k)_{j^c}^\top\hat{\bfa}_{n,j}^k,\bfa_{n,j}-\hat{\bfa}_{n,j}^k\big\rangle+\frac{L_{n,j}^k}{2}\|\bfa_{n,j}-\hat{\bfa}_{n,j}^k\|_2^2\right),\nonumber
\end{align}
where $\bfb_n^{k,j}$ denotes the $j$-th row of $\mbfb_n^k$, $(\mbfb_n^k)^{j^c}$ is the submatrix by taking all rows of $\mbfb_n^k$ except the $j$-th one,  $$\hat{\bfa}_{n,j}^k={\bfa}_{n,j}^{k-1}+\omega_{n,j}^k({\bfa}_{n,j}^{k-1}-{\bfa}_{n,j}^{k-2})$$ is an extrapolated point, $(\tilde{\mbfa}_n^k)_{j^c}$ is short for $\big(\bfa_{n,1}^k,\cdots,\bfa_{n,j-1}^k,\bfa_{n,j+1}^{k-1},\cdots,\bfa_{n,R_n}^{k-1}\big)$, and $L_{n,j}^k$ is a Lipschitz constant of the gradient of $$\frac{1}{2}\left(\sum_{i< j}\big(\bfa_{n,j}^\top\bfa_{n,i}^k\big)^2+\sum_{i> j}\big(\bfa_{n,j}^\top\bfa_{n,i}^{k-1}\big)^2\right)$$ with respect to $\bfa_{n,j}$. 
One can easily write the update in \eqref{an-j} explicitly as
{\small\begin{align}\label{up-an-j}
\bfa_{n,j}^k=\mcp_{B_1}\left[\mcs_{\frac{\lambda_n}{b+\mu L}}\left(\frac{\mu L}{b+\mu L}\hat{\bfa}_{n,j}^k-\frac{\big((\tilde{\mbfa}_n^k)_{j^c}(\mbfb_n^k)^{j^c}-\mbfm_{(n)}\big)\big(\bfb_n^{k,j}\big)^\top}{b+\mu L}-\frac{\mu}{b+\mu L}(\tilde{\mbfa}_n^k)_{j^c}(\tilde{\mbfa}_n^k)_{j^c}^\top\hat{\bfa}_{n,j}^k\right)\right],
\end{align}
}where $b=\|\bfb_n^{k,j}\|_2^2$, $L=L_{n,j}^k$, and $\mcp_{B_1}$ denotes the projection to unit Euclidean ball.

Following the proof of Theorem \ref{thm:convg}, one can show that the method described above has global convergence if the parameters $L_{n,j}^k$, $\omega_{n,j}^k,L_c^{k,n},\omega_c^{k,n}$ satisfy conditions as those in Theorem \ref{thm:convg}. We do not repeat it here.

\section{Numerical experiments}\label{sec:numerical}
In this section, we compare Algorithm \ref{alg:apg} (APG), HONMF in \cite{morup2008algorithms}, and HALS in \cite{phan2011extended} for solving (sparse) NTD on both synthetic and real world data. Also, we test the modified version of Algorithm \ref{alg:apg} and HONMF for solving (sparse) NTD with missing values. The code of all compared solvers is accessible online. There are of course more other solvers for (sparse) NTD such as dGN in \cite{phan2011damped}, ALS in \cite{friedlander2008computing}, and CCD in \cite{liu2011sparse}. However, we do not get the code of dGN, and the code of CCD and ALS only handles the case where the core tensor is fixed to identity tensor. 

All the tests are performed on a laptop with an i7-620m CPU and 3GB RAM and running 32-bit Windows 7 and MATLAB 2010b with Statistics Toolbox and Tensor Toolbox of version 2.5 \cite{TTB_Software}.

\subsection{Implementation details}
This subsection specifies the implementation of Algorithm \ref{alg:apg} in details about initialization and stopping criteria. Unless specified, all parameters for HONMF and HALS are set to their default values.

\subsubsection*{Initialization} For all the compared algorithms, we use the same starting point. Throughout the tests, we first randomly generate ${\mbfa}_1^0,\cdots,{\mbfa}_N^0$ and then process them by the Higher-order Orthogonal Iteration algorithm in \cite{de2000best}. Specifically, for \eqref{eq:spntd}, let 
\begin{equation}\label{eq:init-a}\bm{\mcb}=\bm{\mcm}\times_1(\mbfa_1^0)^\top\cdots\times_{n-1}(\mbfa_{n-1}^0)^\top\times_{n+1}(\mbfa_{n+1}^0)^\top\times_N(\mbfa_{N}^0)^\top,
\end{equation}
and update $\mbfa_n^0=\max(\eps_{machine},\mbfu_n)$ alternatively for $n=1,\cdots,N$, where $\eps_{machine}$ stands for machine precision and $\mbfu_n$ contains the left $R_n$ singular vectors of $\mbfb_{(n)}$. Then set 
\begin{equation}\label{eq:init-c}\bm{\mcc}^0=\bm{\mcm}\times_1(\mbfa_1^0)^\top
\cdots\times_N(\mbfa_{N}^0)^\top.
\end{equation}
For \eqref{eq:spntdc}, we use the same initialization except replacing $\bm{\mcm}$ to $\mcp_\Omega(\bm{\mcm})$ in \eqref{eq:init-a} and \eqref{eq:init-c}. It is observed that all the algorithms perform better with this kind of starting point than a random one, in both convergence speed and chance of avoiding local minima. The use of strictly positive initial points is mainly due to the consideration that HONMF does not allow its iterates to have zero components. 

\subsubsection*{Stopping criteria} We stop Algorithm \ref{alg:apg} and its modified version in section \ref{sec:spntdc} if a maximum number of iterations or maximum time is reached or one of the following conditions is satisfied
\begin{subequations}
\begin{align}
&\frac{\|\mcp_\Omega\big(\bm{\mcc}^k\times_1\mbfa_1^k\cdots\times_N\mbfa_N^k-\bm{\mcm}\big)\|_F}{\|\mcp_\Omega(\bm{\mcm})\|_F}\le tol,\ \text{ for some }k,\\
&\frac{|F_\Omega^k-F_\Omega^{k+1}|}{1+F_\Omega^k}\le tol,\ \text{ for three consecutive }k\text{'s},
\end{align}
\end{subequations}
where $F_\Omega^k\triangleq F_\Omega(\bm{\mcc}^k,\mbfa_1^k,\cdots,\mbfa_N^k)$ and $tol$ is a small positive value specified below. Note that for Algorithm \ref{alg:apg}, $\Omega$ contains all indices.

\subsection{Nonnegative Tucker decomposition}
In this subsection, we compare APG, HONMF, and HALS on solving NTD, i.e., \eqref{eq:spntd} with all of $\lambda_c,\lambda_1,\cdots,\lambda_N$ set to \emph{zero}.
We first test them on two sets of synthetic data and then on two image datasets.

\subsubsection*{Synthetic data}
In the first synthetic dataset, each tensor has the form  $\bm{\mcm}=\bm{\mcc}\times_1\mbfa_1\times_2\mbfa_2\times_3\mbfa_3$, where $\bm{\mcc}$ is generated by MATLAB's command \verb|rand(5,5,5)| and each $\mbfa_i$ by command \verb|max(0,randn(80,5))|. Then $\bm{\mcm}$ is re-scaled to have unit maximum component. Each tensor $\bm{\mcm}$ in the second test is generated in the same way but has an \emph{unbalanced} dimension $10\times10\times1000$, and the core tensor is $3\times3\times30$. We emphasize that uniformly random $\bm{\mcc}$ makes the problem more difficult\footnote{For the case that $\bm{\mcc}$ is also Gaussian randomly generated, the performance of APG and HALS is similar.} than Gaussian random one because the former is not \emph{zero}-mean. The true dimension is used in our tests, namely, $I_n=50,R_n=5,\forall n$ is set in \eqref{eq:spntd} for the first dataset and $(I_1,I_2,I_3)=(10,10,1000),(R_1,R_2,R_3)=(3,3,30)$ for the second one. 

We add normalized noise to each tensor, namely, we input to each algorithm with $\bm{\mcm}^{nois}=\bm{\mcm}+\eta\frac{\|\bm{\mcm}\|_F}{\|\bm{\mcn}\|_F}\bm{\mcn}$, where the entries of $\bm{\mcn}$ follow i.i.d standard Gaussian distribution. We run each algorithm to $t_{\max}$ (sec) and compare their relative error $\frac{\|\bm{\mcc^r}\times_1\mbfa_1^r\times_2\mbfa_2^r\times_3\mbfa_3^r-\bm{\mcm}\|_F}{\|\bm{\mcm}\|_F}$, where $(\bm{\mcc}^r, \mbfa_1^r, \mbfa_2^r, \mbfa_3^r)$ is a solution obtained by running an algorithm. Table \ref{tab:rand-ntd} shows the average relative error and number of iterations for the three algorithms over 20 independent runs with $t_{\max}=10$ and different $\eta$'s.  Figure \ref{fig:rand-ntd} plots how the relative error changes with respect to the running time for each algorithm with $t_{\max}=20$ and also to iterations.

From the table, we see that APG performs significantly better than HONMF and HALS for noiseless case. When there is noise, i.e., $\eta>0$, APG is still much better than HONMF and comparable to HALS. From the figure, we see that HONMF converges very slowly\footnote{The code of HONMF is implemented for NTD with missing value. Its running time would be reduced if it were implemented separately for the NTD. However, we observe that HONMF converges much slower than our algorithm.} in both cases and HALS works well for $\bm{\mcm}$ with balanced dimension but converges slowly for the unbalanced one. APG converges faster than both HONMF and HALS, in particular for the unbalanced case. 

To see how the algorithms perform on decomposing nonnegative tensors with larger ranks, we also test them on random tensors generated in the same way as above with size $80\times 80\times 80$ and each mode rank $r$, where $r$ varies from 3 to 30 with increment 3. Each algorithm runs to 1,000 iterations. Figure \ref{fig:rand-diff-rank} plots the average relative errors of 10 independent runs for each algorithm. From the figure, we see that APG performs consistently better than HONMF and HALS and much better when $r$ is small.

\begin{table}\caption{Average results over 20 independent runs by APG, HONMF and HALS on two synthetic datasets}\label{tab:rand-ntd}
\begin{center}
{\footnotesize
\begin{tabular}{|c|cc|cc|cc|}\hline
& \multicolumn{2}{|c|}{APG} & \multicolumn{2}{|c|}{HONMF} & \multicolumn{2}{|c|}{HALS}\\\hline
noise level & rel. err. & \# iter & rel. err. & \# iter & rel. err. & \# iter\\\hline
\multicolumn{7}{|c|}{$(I_1,I_2,I_3)=(80,80,80), (R_1,R_2,R_3)=(5,5,5)$}\\\hline
$\eta = 0.00$ & 7.09e-004 & 467 & 6.06e-002 & 87 & 2.45e-003 & 758\\
$\eta = 0.05$ & 2.79e-003 & 468 & 6.86e-002 & 48 & 3.27e-003 & 732\\
$\eta = 0.10$ & 4.78e-003 & 466 & 7.15e-002 & 47 & 5.44e-003 & 759\\\hline\hline
\multicolumn{7}{|c|}{$(I_1,I_2,I_3)=(10,10,1000), (R_1,R_2,R_3)=(3,3,30)$}\\\hline
$\eta = 0.00$ & 5.12e-004 & 653 & 2.52e-002 & 287 & 2.97e-003 & 737\\
$\eta = 0.05$ & 1.49e-002 & 668 & 3.00e-002 & 232 & 1.50e-002 & 739\\
$\eta = 0.10$ & 3.02e-002 & 670 & 3.84e-002 & 222 & 3.01e-002 & 740\\\hline
\end{tabular}}
\end{center}
\end{table}

\begin{figure}\caption{Convergence behavior of APG, HONMF and HALS on synthetic data. Left: $80\times80\times80$ nonnegative tensor $\bm{\mcm}$ and $5\times5\times5$ core tensor $\bm{\mcc}$; Right: $10\times10\times1000$ nonnegative tensor $\bm{\mcm}$ and $3\times3\times30$ core tensor $\bm{\mcc}$.}
\label{fig:rand-ntd}
\begin{center}
\includegraphics[width=0.35\textwidth]{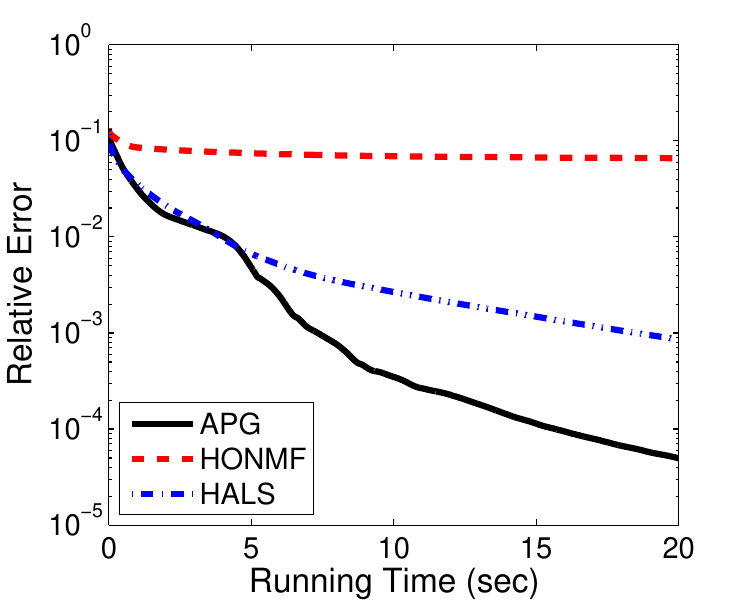}
\includegraphics[width=0.35\textwidth]{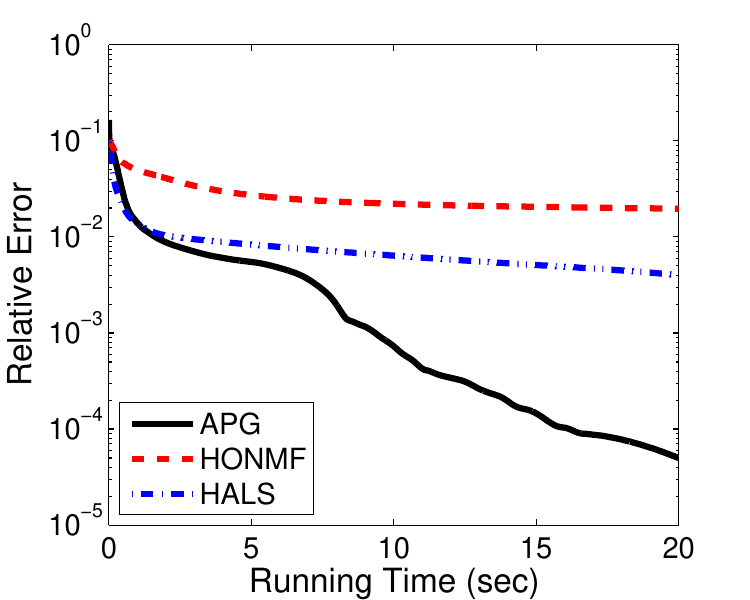}\\
\includegraphics[width=0.35\textwidth]{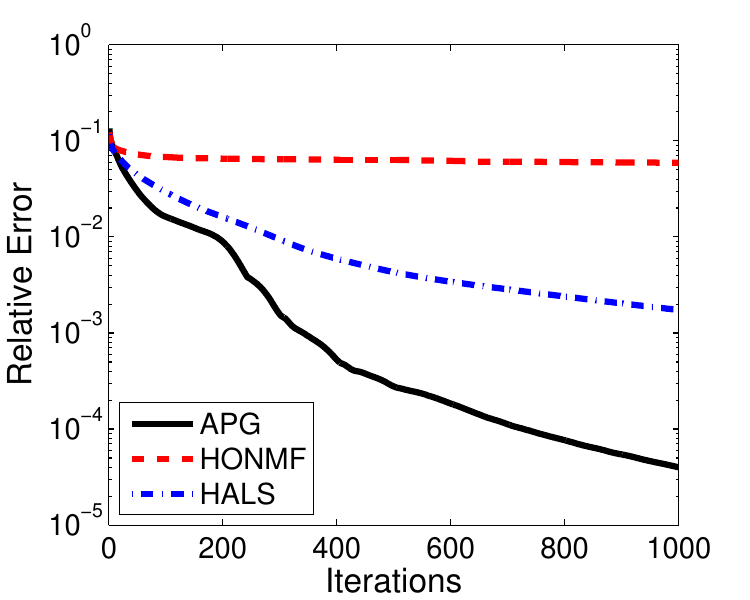}
\includegraphics[width=0.35\textwidth]{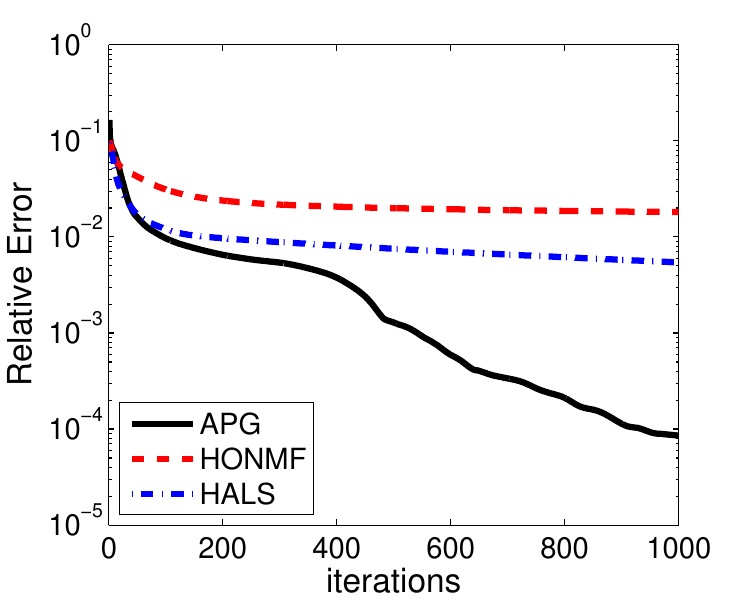}
\end{center}
\end{figure}

\subsubsection*{Image data}
The first test uses the Swimmer dataset constructed in \cite{donoho2003}, which has 256 swimmer images and each one has resolution of $32\times32$. We form a $32\times32\times256$ tensor $\bm{\mcm}$ using the dataset and then re-scale it to have unit maximum component. The core dimension is set to $(24,20,20)$\footnote{The mode-$n$ ranks of  $\bm{\mcm}$ are 24, 14, and 13 for $n=1,2,3$, respectively. Larger size is used to improve the data fitting.}. We run APG, HONMF, and HALS to $t_{\max}=30$ (sec) and plot their relative errors on the left of Figure \ref{fig:image-ntd}. The second test uses a brain MRI image of size $181\times 217\times181$, which has been tested in \cite{liu2011sparse} for sparse nonnegative tensor decomposition. We re-scale it to have unit maximum pixel and set the core size to $(30,30,30)$. All the three algorithms run to $t_{\max}=600$ (sec), and the relative errors are plotted on the right of Figure \ref{fig:image-ntd}. From the figure, we see that HONMF performs the worst and HALS decreases the objective faster than APG in the beginning but APG eventually converges faster. In particular for the test with Swimmer dataset, the overall convergence speed of APG is much faster than that of HALS, and APG reaches much lower relative errors while HALS seems to be trapped at some local solution\footnote{Sometimes, APG is also  trapped at some local solution. We run the three algorithms on the Swimmer dataset to maximum 30 seconds. If the relative error is below $10^{-3}$, we regard the algorithm reaches a global solution. Among 20 independent runs, APG, HONMF, and HALS reach a global solution 11, 0, and 5 times, respectively. We also test the three algorithms with smaller rank (24,18,17), in which case APG, HONMF, and HALS reach a global solution 16, 0, and 4 times respectively among 20 independent runs.}.

\begin{figure}\caption{Average relative errors of 10 independent runs for APG, HONMF, and HALS on synthetic tensors of size $80\times80\times80$ and with each mode rank $r$.}
\label{fig:rand-diff-rank}
\begin{center}
\includegraphics[width=0.35\textwidth]{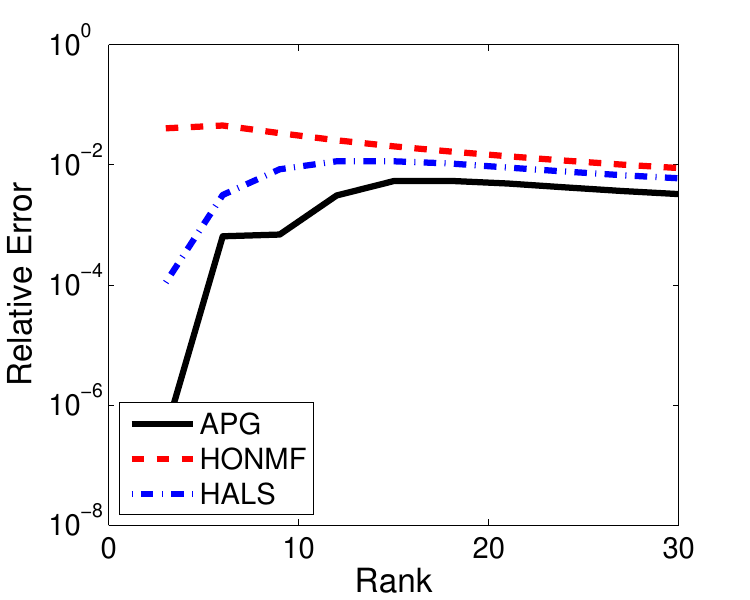}
\end{center}
\end{figure}

\begin{figure}\caption{Convergence behavior of APG, HONMF, and HALS on Swimmer dataset (left) and a brain MRI image (right).}
\label{fig:image-ntd}
\begin{center}
\includegraphics[width=0.35\textwidth]{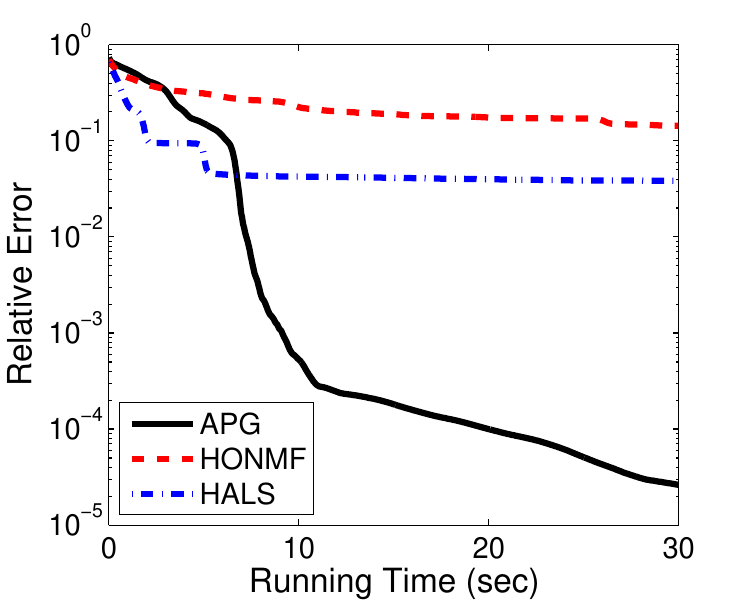}
\includegraphics[width=0.35\textwidth]{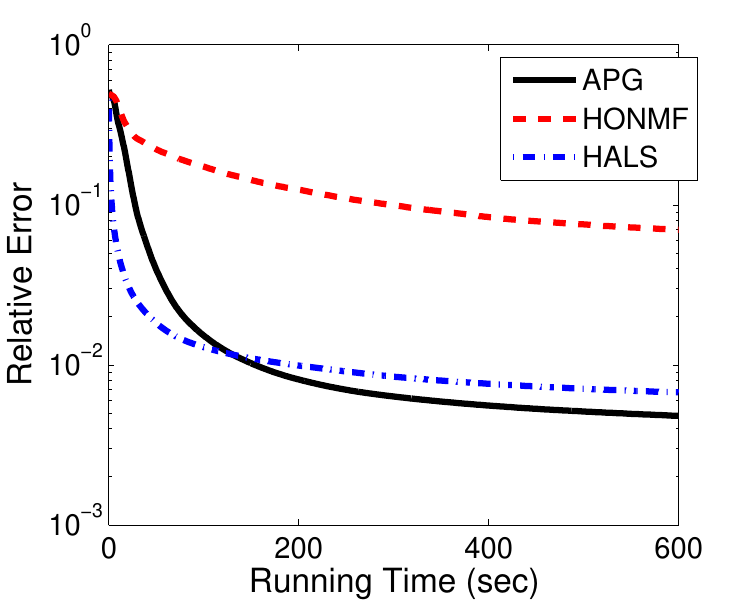}
\end{center}
\end{figure}

\subsection{Sparse nonnegative Tucker decomposition}
In this subsection, we compare APG and HONMF for solving sparse NTD, i.e., \eqref{eq:spntd} with at least one of $\lambda_c,\lambda_1,\cdots,\lambda_N$ set to be positive.
 HALS is not coded\footnote{In the implementation of HALS, all factor matrices are re-scaled such that each column has unit length after each iteration. The re-scaling is necessary for efficient update of the core tensor and does not change the objective value of \eqref{eq:spntd} if all  sparsity paramenters are \emph{zero}. However, it will change the objective if some of $\lambda_c,\lambda_1,\cdots,\lambda_N$ are positive.} for sparse NTD.  Hence, we do not include HALS for comparison. 

We compare APG and HONMF on the brain MRI image used above and the CBCL face image dataset\footnote{http://www.ai.mit.edu/projects/cbcl} which has been tested in \cite{shashua2005non} for nonnegative tensor decomposition. For the brain MRI image, we set $R_1=R_2=R_3=30$ and $\lambda_c=\lambda_1=\lambda_2=\lambda_3=0.5$ in \eqref{eq:spntd}. We run APG and HONMF to $t_{\max}=300$ (sec) and report the results at time $t=100,200,300$ (sec). Table \ref{tab:MRI-ntds} summarizes the average results of 10 independent runs. The ``core den.'' is calculated by $\frac{\#\text{ nonzeros of }\bm{\mcc}^r}{30^3}$ and ``fac. den.'' by $\frac{\sum_{n=1}^3\#\text{ nonzeros of }\mbfa_n^r}{30\cdot(181+217+181)}$. 
We see that APG reaches much lower objective values and relative errors than those by HONMF. In addition, the solutions obtained by APG are sparser than those by HONMF and are potentially easier to interpret.

The CBCL dataset has 6977 face images, and each one is $19\times 19$. We use all these images to form a $19\times19\times6977$ nonnegative tensor $\bm{\mcm}$, which is then re-scaled to have unit maximum component. The core size is set to $(R_1,R_2,R_3)=(5,5,50)$ and the sparsity parameters to $\lambda_c=0.5,\lambda_1=\lambda_2=\lambda_3=0$, namely, we only want the core tensor to be sparse. Table \ref{tab:cbcl-ntds} reports the average results obtained by APG and HONMF at running time $t=25,50,75,100$ (sec). We see that APG reaches much lower objective values and also lower relative errors than those by HONMF. The solutions given by APG are much sparser than those by HONMF. This may be because APG uses the constraints \eqref{eq:con-a} while HONMF simply normalizes each factor matrix after every iteration. However, it somehow validates the use of the constraints \eqref{eq:con-a}.

\begin{table}\caption{Average results by APG and HONMF on a brain MRI image with the core size $R_1=R_2=R_3=30$}\label{tab:MRI-ntds}
{\small\begin{center}
\resizebox{\textwidth}{!}{\begin{tabular}{|c|ccccc|ccccc|}
\hline
 &\multicolumn{5}{|c|}{APG} & \multicolumn{5}{|c|}{HONMF}\\\hline
time & obj. & rel. err. & fac. den. & core den. & \# iter & obj. & rel. err. & fac. den. & core den. & \# iter \\\hline
100 & 1.6622e+3 & 6.15e-2 & 32.45\% &  7.62\% & 185 & 6.2934e+3 & 1.77e-1 & 32.47\% & 31.84\% & 31\\
200 & 8.4659e+2 & 2.94e-2 & 23.22\% & 14.45\% & 370 & 4.7762e+3 & 1.48e-1 & 30.61\% & 31.01\% & 48\\
300 & 6.8898e+2 & 2.25e-2 & 20.49\% & 16.87\% & 555 & 4.0240e+3 & 1.30e-1 & 29.14\% & 29.91\% & 63\\\hline
\end{tabular}}
\end{center}
}
\end{table}

\begin{table}\caption{Average results by APG and HONMF on CBCL dataset with the core size $(R_1,R_2,R_3)=(5,5,50)$}\label{tab:cbcl-ntds}
{\footnotesize\begin{center}
\resizebox{\textwidth}{!}{\begin{tabular}{|c|cccc|cccc|}
\hline
 & \multicolumn{4}{|c|}{APG} & \multicolumn{4}{|c|}{HONMF}\\\hline
time &  obj. & rel. err. & core den. & \# iter & obj. & rel. err.  & core den. & \# iter\\\hline
25 & 3.2469e+4 & 2.72e-1 & 11.45\% & 135 & 5.9824e+4 & 3.63e-1 & 90.19\% & 29\\
50 & 3.1453e+4 & 2.68e-1 & 7.56\% & 271 & 5.3017e+4 & 3.40e-1 & 69.26\% & 57\\
75 & 3.1370e+4 & 2.68e-1 & 6.78\% & 408 & 4.9786e+4 & 3.28e-1 & 58.03\% & 84\\
100 & 3.1344e+4 & 2.67e-1 & 6.46\% & 545 & 4.7289e+4 & 3.20e-1 & 51.26\% & 112\\\hline
\end{tabular}}
\end{center}
}
\end{table}

\subsection{Sparse nonnegative Tucker decomposition with missing values}
In this subsection, we test APG for solving \eqref{eq:spntdc} on synthetic data and compare it to HONMF on the brain MRI image used above.

\subsubsection*{Performance of APG with different sample ratios} First, we show that APG using partial observations can achieve similar accuracies as that using full observations. Each tensor has the form $\bm{\mcm}=\bm{\mcc}\times_1\mbfa_1\times_2\mbfa_2\times_3\mbfa_3$ and is re-scaled to have unit maximum component, where $\bm{\mcc}$ is generated by MATLAB's command \verb|max(0,randn(R,R,R))| and each factor matrix $\mbfa_n$ by \verb|max(0,randn(50,R))| with \verb|R| varying among $\{5,8,11,14,17,20,23,26\}$. We choose $\text{SR}=10\%,30\%,50\%,100\%$ samples uniformly at random and compare the performance of APG using different SRs. The maximum number of iterations is set to 5,000 and the stopping tolerance to $tol=10^{-5}$. Figure \ref{fig:rand-ntdc} plots the average relative errors and running time (sec) of APG over 20 independent trials. We see that APG using 30\% and 50\% samples gives similar accuracies as that using full observations. APG with 10\% samples can still make relative errors low to about 1\%  as $R\le 14$, but $10\%$ samples seem not enough when $R\ge 17$. Longer time by APG with partial observations is due to the extra update \eqref{update-x} and more iterations. When $R\ge17$, the running time of APG with 10\% samples decreases because it stops earlier.

\begin{figure}\caption{Average relative errors (left) and runnting time (right) by APG using different sample ratios}
\label{fig:rand-ntdc}
\begin{center}
\includegraphics[width=0.35\textwidth]{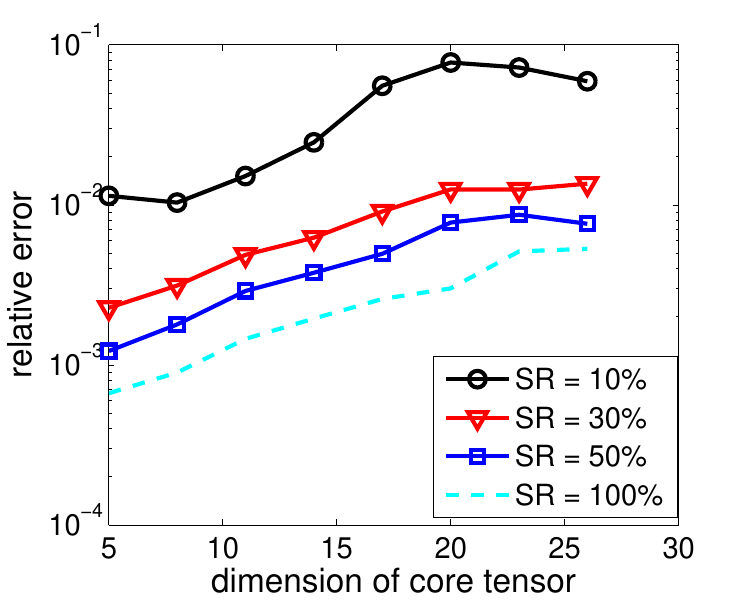}
\includegraphics[width=0.35\textwidth]{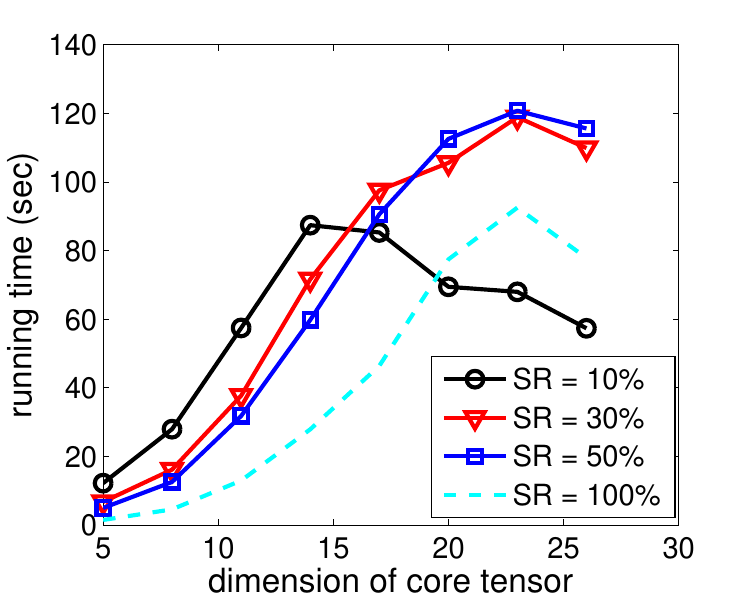}
\end{center}
\end{figure}

\subsubsection*{Comparison with HONMF\footnote{Although HONMF converges very slowly, it is the only one we can find that is also coded for sparse nonnegative Tucker decomposition with missing values.}} Secondly, we compare APG to HONMF on the brain MRI image used above. The core dimension is set to $R_1=R_2=R_3=30$ and sparsity parameters to $\lambda_c=\lambda_1=\lambda_2=\lambda_3=0.5$. We compare the two algorithms using $\text{SR}=10\%,30\%,50\%$ uniformly randomly chosen samples and run them to $t_{\max}=600$ (sec). Table \ref{tab:MRI-ntdc} shows the average results at time $t=150,300,450,600$ for different SRs over 5 independent trials. From the table, we see that HONMF fails with 10\% samples while APG can still work reasonably. In all cases, APG performs better than HONMF in both accuracy and speed. The solutions given by APG are sparser than those by HONMF for $\text{SR}=30\%,50\%$.

\begin{table}\caption{Average results by APG and HONMF on a brain MRI image from different samples}\label{tab:MRI-ntdc}
{\small\begin{center}
\resizebox{\textwidth}{!}{\begin{tabular}{|c|ccccc|ccccc|}
\hline
& \multicolumn{5}{|c|}{APG} & \multicolumn{5}{|c|}{HONMF}\\\hline
time & obj. & rel. err. & fac. den. & core den. & \# iter & obj. & rel. err. & fac. den. & core den. & \# iter \\\hline\hline
\multicolumn{11}{|c|}{SR = 10\%}\\\hline
150 & 1.3418e+3 & 2.32e-1 & 30.49\% & 0.72\% & 208 & 1.5608e+4 & 1.00e+0 & 0.00\% & 0.00\% & 67\\
300 & 9.3130e+2 & 1.80e-1 & 17.03\% & 0.88\% & 416 & 1.5608e+4 & 1.00e+0 & 0.00\% & 0.00\% & 150\\
450 & 7.9761e+2 & 1.60e-1 & 14.28\% & 1.23\% & 623 & 1.5608e+4 & 1.00e+0 & 0.00\% & 0.00\% & 232\\
600 & 7.4748e+2 & 1.53e-1 & 13.60\% & 1.40\% & 831 & 1.5608e+4 & 1.00e+0 & 0.00\% & 0.00\% & 315\\\hline
\multicolumn{11}{|c|}{SR = 30\%}\\\hline
150 & 1.5808e+3 & 1.27e-1 & 35.15\% & 2.31\% & 191 & 2.4525e+3 & 1.88e-1 & 31.80\% & 41.87\% & 40\\
300 & 9.0284e+2 & 8.17e-2 & 19.65\% & 4.38\% & 384 & 1.9277e+3 & 1.58e-1 & 28.90\% & 38.48\% & 67\\
450 & 7.0151e+2 & 6.25e-2 & 17.29\% & 6.34\% & 576 & 1.6362e+3 & 1.38e-1 & 26.44\% & 33.07\% & 96\\
600 & 6.2076e+2 & 5.34e-2 & 15.69\% & 7.60\% & 769 & 1.4587e+3 & 1.25e-1 & 24.54\% & 30.12\% & 129\\\hline
\multicolumn{11}{|c|}{SR = 50\%}\\\hline
150 & 1.8767e+3 & 1.08e-1 & 35.03\% & 3.64\% & 184 & 3.7494e+3 & 1.91e-1 & 31.75\% & 36.66\% & 40\\
300 & 9.5363e+2 & 5.88e-2 & 22.42\% & 7.55\% & 367 & 2.8367e+3 & 1.59e-1 & 28.70\% & 40.15\% & 64\\
450 & 7.0877e+2 & 4.03e-2 & 19.29\% & 10.74\% & 550 & 2.3369e+3 & 1.37e-1 & 26.48\% & 42.33\% & 92\\
600 & 6.2737e+2 & 3.32e-2 & 17.75\% & 12.41\% & 733 & 2.0265e+3 & 1.23e-1 & 24.96\% & 42.58\% & 124\\\hline
\end{tabular}}
\end{center}
}
\end{table}

\subsection{Sparse higher-order principal component analysis}
We use a simple test with synthetic data to show that \eqref{eq:rhopca} can be better than unregularized HOPCA that sets all of $\lambda_c,\lambda_1,\cdots,\lambda_N$ to \emph{zero} in \eqref{eq:hopca}. We use the APG method described in Section \ref{sec:hopca} for \eqref{eq:rhopca} and HOOI \cite{de2000best} for the unregularized HOPCA. We set $L_{n,j}^k=\|(\tilde{\mbfa}_n^k)_{j^c}(\tilde{\mbfa}_n^k)_{j^c}^\top\|$ in \eqref{up-an-j} and $\omega_{n,j}^k$ in the same way as in \eqref{eq:weighta}.

We generate a $50\times50\times50$ tensor in the form of $\bm{\mcm}=\bm{\mcc}\times_1\mbfa_1\times_2\mbfa_2\times\mbfa_3+\bm{\mcn}$. Here, $\bm{\mcc}$ is $3\times3\times3$, and each element is drawn from standard Gaussian distribution. Then 60\% components of $\bm{\mcc}$ are selected uniformly at random and set to \emph{zero}. Factor matrices have sparsity patterns shown in Figure \ref{fig:pattern}, and each non-zero element is drawn from standard Gaussian distribution. Then each column is normalized. $\bm{\mcn}$ is Gaussian random noise and makes the signal-to-noise-ratio $\text{SNR}=60$. The sparsity parameters are set to $\lambda_c=\lambda_1=\lambda_2=\lambda_3=0.02$, and orthogonality parameter is tuned to $\mu=0.1$ in \eqref{eq:rhopca}. The sparsity patterns of the original $\bm{\mcc}$ and $\mbfa$ and those\footnote{We permute the columns of the factor matrices and do permutations to the core tensor accordingly.} given by APG are plotted in Figure \ref{fig:pattern}. We see that the solution given by APG have almost the same sparsity pattern as the original ones. To see how close to orthogonality each factor matrix is given by APG, we first normalize each column of the factor matrices and then calculate $\|\mbfa_n^\top\mbfa_n-\mbfi\|_F/\|\mbfi\|_F$, which are $2.95\times10^{-3}, 1.36\times10^{-3}, 7.24\times10^{-5}$, respectively for $n=1,2,3$. Hence, they are almost orthogonal. Although the solution by HOOI makes a relatively higher data fitting,  it is highly dense with no zero element.  Therefore, the relaxed model \eqref{eq:rhopca} can potentially give better solution than \eqref{eq:hopca} for some applications such as classification.

\begin{figure}\caption{Sparsity pattern of the orginal $\bm{\mcc}$ and $\mbfa$ and those given by APG method}\label{fig:pattern}
\centering
\begin{tabular}{cc}\\
Original $\mbfa$ & APG's $\mbfa$\\
\includegraphics[width = 0.12\textwidth]{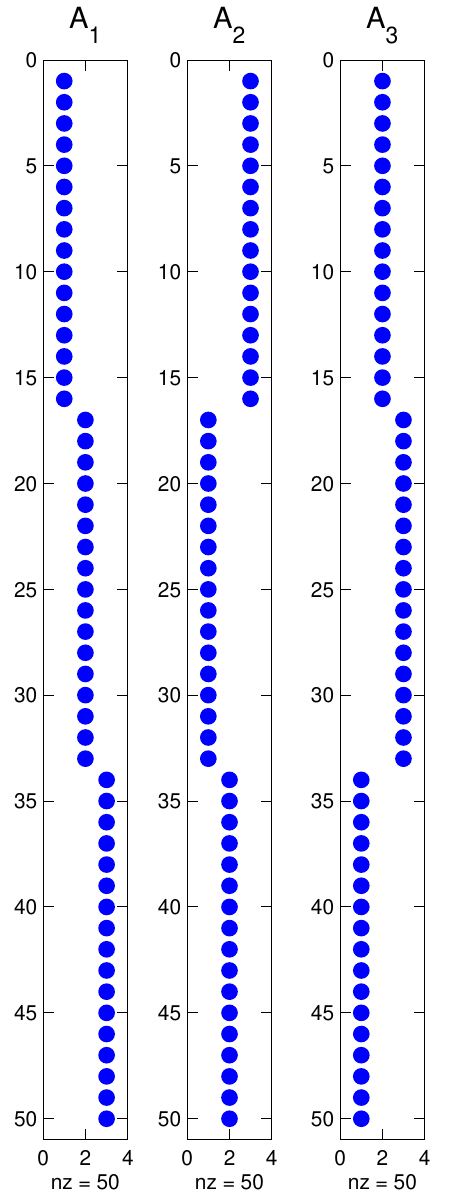}&\includegraphics[width = 0.12\textwidth]{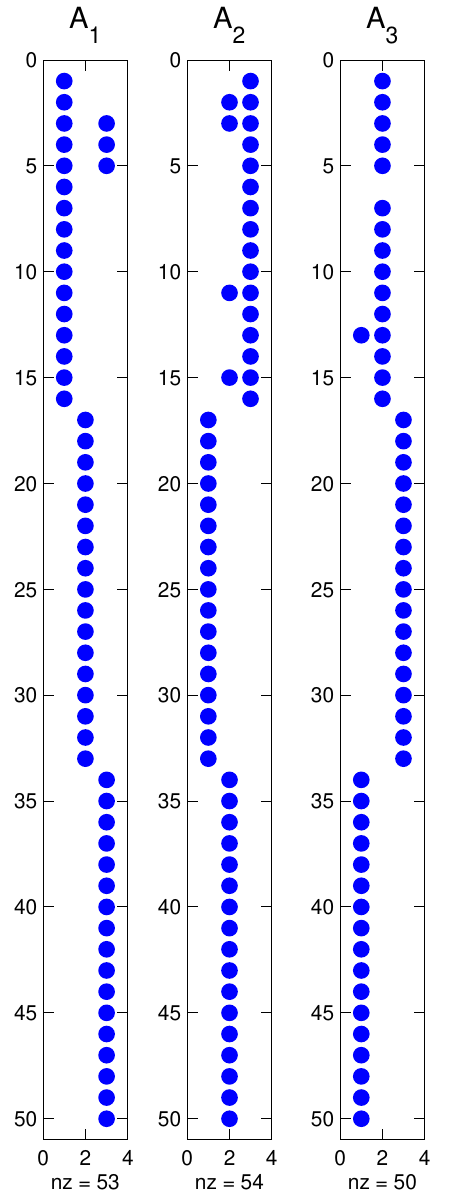}
\end{tabular}
\begin{tabular}{c}
Original $\bm{\mcc}$\\
\includegraphics[width = 0.32\textwidth]{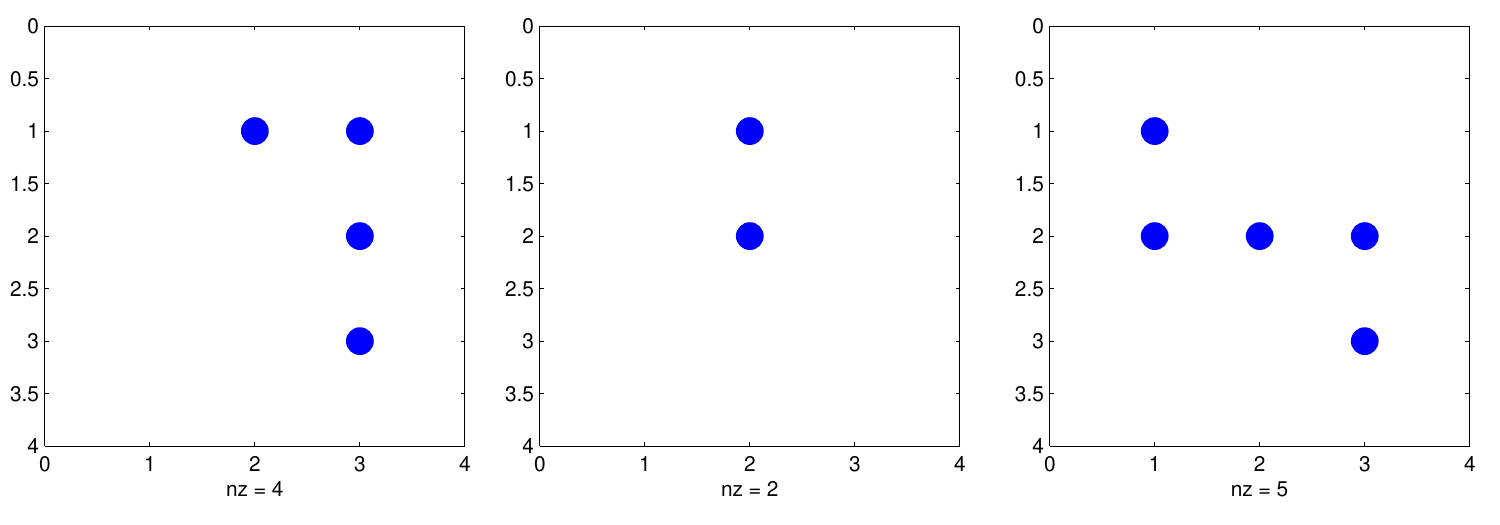}\\
APG's $\bm{\mcc}$\\
\includegraphics[width = 0.32\textwidth]{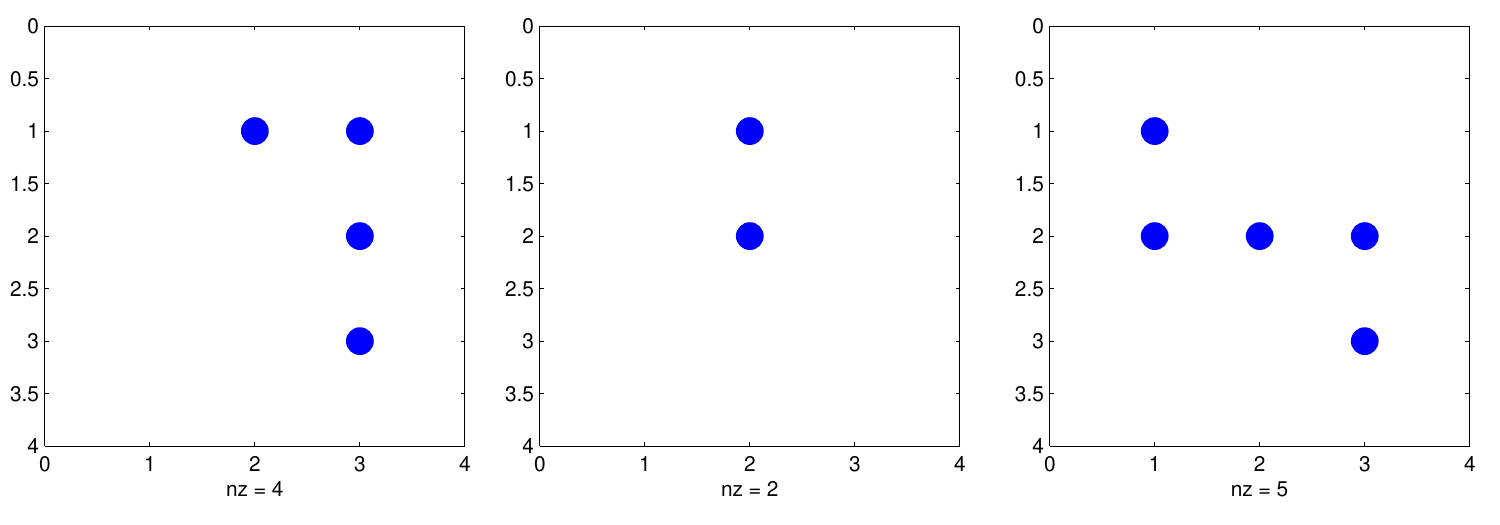}
\end{tabular}
\end{figure} 

\section{Conclusions}\label{sec:discussion} Sparse NTD aims at decomposing a tensor into the product of a core tensor and some factor matrices with nonnegativity and sparsity constraints. 
Existing algorithms for this problem either converge rapidly with very expensive per-iteration cost or have low per-iteration cost with very slow convergence speed. We have proposed the APG method, which owns both low per-iteration complexity and fast convergence speed. Moreover, the algorithm has been modified for sparse NTD from partial observations of a target tensor. The modified algorithm also has low per-iteration cost and can give similar decompositions from half of or even fewer observations as those from full observations.

\section*{Acknowledgements}
This work is partly supported by ARL and ARO grant W911NF-09-1-0383 and AFOSR FA9550-10-C-0108. The author would like to thank three anonymous referees, the technical editor and the associate editor for their very valuable comments and suggestions. Also, the author would like to thank Prof. Wotao Yin for his valuable discussions and Anh Huy Phan for sharing the code of HALS.

\appendix
\section{Efficient computation}
The most expensive step in Algorithm \ref{alg:apg} is the computation of $\nabla_{\bm{\mcc}}\ell(\bm{\mcc},
\mbfa)$ and $\nabla_{\mbfa_n}\ell(\bm{\mcc},
{\mbfa})$ in \eqref{update-c} and \eqref{update-a}, respectively. Note that we have omitted the superscript. Next, we discuss how to efficiently compute them.

\subsubsection*{Computation of $\nabla_{\bm{\mcc}}\ell$}
According to \eqref{eq:vec}, we have
$$\ell(\bm{\mcc},\mbfa)=\frac{1}{2}\big\|\big(\otimes_{n=N}^1\mbfa_n\big)\vc(\bm{\mcc})-\vc(\bm{\mcm})\big\|_2^2.$$
Using the properties of Kronecker product (see \cite{horn1991topics}, for example), 
we have
\begin{equation}\label{eq:vecgrad}
\vc\big(\nabla_{\bm{\mcc}}\ell(\bm{\mcc},\mbfa)\big)=\big(\otimes_{n=N}^1\mbfa_n^\top \mbfa_n\big)\vc(\bm{\mcc})-\big(\otimes_{n=N}^1\mbfa_n^\top \big)\vc(\bm{\mcm}).
\end{equation}
It is extremely expensive to explicitly reformulate the Kronecker products in \eqref{eq:vecgrad}. Fortunately, we can use \eqref{eq:vec} again to have
$$\big(\otimes_{n=N}^1\mbfa_n^\top \mbfa_n\big)\vc(\bm{\mcc})=\vc\big(\bm{\mcc}\times_1\mbfa_1^\top \mbfa_1\cdots\times_N \mbfa_N^\top \mbfa_N\big)$$
and
$$\big(\otimes_{n=N}^1\mbfa_n^\top \big)\vc(\bm{\mcm})=\vc\big(\bm{\mcm}\times_1\mbfa_1^\top\cdots\times_N\mbfa_N^\top\big).$$
Hence, we have from \eqref{eq:vecgrad} and the above two equalities that
\begin{equation}\label{eq:gradc} 
\nabla_{\bm{\mcc}}\ell(\bm{\mcc},\mbfa)=\bm{\mcc}\times_1\mbfa_1^\top \mbfa_1\cdots\times_N \mbfa_N^\top \mbfa_N-\bm{\mcm}\times_1\mbfa_1^\top\cdots\times_N\mbfa_N^\top.
\end{equation}

\subsubsection*{Computation of $\nabla_{\mbfa_n}\ell$}
According to \eqref{eq:mat}, we have
\begin{equation}\label{eq-ell}\ell(\bm{\mcc},\mbfa)=\frac{1}{2}\big\|\mbfa_n\mbfc_{(n)}\big(\otimes_{\substack{i=N\\i\neq n}}^1\mbfa_i\big)^\top-\mbfm_{(n)}\big\|_F^2.
\end{equation}
Hence,
\begin{equation}\label{eq:grada}\nabla_{\mbfa_n}\ell(\bm{\mcc},\mbfa)=\mbfa_n(\mbfb_n\mbfb_n^\top)-\mbfm_{(n)}\mbfb_n^\top
\end{equation}
where 
\begin{equation}\label{eq:b}\mbfb_n=\mbfc_{(n)}\big(\otimes_{\substack{i=N\\i\neq n}}^1\mbfa_i\big)^\top.
\end{equation} Similar to what has been done to \eqref{eq:vecgrad}, we do not explicitly reformulate the Kronecker product in \eqref{eq:b} but let
\begin{equation}\label{eq:compb}
\bm{\mcx}=\bm{\mcc}\times_1\mbfa_1\cdots\times_{n-1}\mbfa_{n-1}\times_{n+1}
\mbfa_{n+1}\cdots\times_N\mbfa_N.
\end{equation}
Then we have $\mbfb_n=\mbfx_{(n)}$ according to \eqref{eq:mat}.

\section{Complexity analysis of Algorithm \ref{alg:apg}}\label{sec:complexity}
The main cost of Algorithm \ref{alg:apg} lies in computing $\nabla_{\bm{\mcc}}\ell(\bm{\mcc},\mbfa)$ and $\nabla_{\mbfa_n}\ell(\bm{\mcc}, \mbfa)$, which are required in \eqref{update-c} and \eqref{update-a}, respectively. Note that we have omitted all superscripts for simplicity. Through \eqref{eq:gradc}, the computation of $\nabla_{\bm{\mcc}}\ell(\bm{\mcc},\mbfa)$ requires 
\begin{equation}\label{eq:cost-c}
C\left(\sum_{j=1}^N R_j^2 I_j+\sum_{j=1}^NR_j\prod_{i=1}^N R_i+\sum_{j=1}^N\big(\prod_{i=1}^j R_i\big)\big(\prod_{i=j}^N I_i\big)\right)
\end{equation}
flops, where $C\approx 2$, the first part comes from the computation of all $\mbfa_i^\top\mbfa_i$'s, and the second and third parts are respectively from the computations of the first and second terms in \eqref{eq:gradc}. Disregarding\footnote{In tensor-matrix multiplications, unfolding and folding a tensor both happens, and they can take about a half of time in the whole process of tensor-matrix multiplication. The readers can refer to \cite{schatz2013exploiting} for issues about the cost of tensor unfolding and permutation.} the time for unfolding a tensor and using \eqref{eq:grada}, we have the cost for $\nabla_{\mbfa_n}\ell(\bm{\mcc},\mbfa)$ to be
{\small\begin{align}\label{eq:cost-a}
&C\left(\underset{\text{part 1}}{\underbrace{\sum_{j=1}^{n-1}\big(\prod_{i=1}^jI_i\big)\big(\prod_{i=j}^N R_i\big)
+R_n\big(\prod_{i=1}^{n-1}I_i\big)\sum_{j=n+1}^N\big(\prod_{i=n+1}^jI_i\big)
\big(\prod_{i=j}^NR_i\big)}}\right.\cr
&\hspace{1cm}\left.+\underset{\text{part 2}}{\underbrace{R_n^2\prod_{i\neq n}I_i+R_n^2I_n}}+\underset{\text{part 3}}{\underbrace{R_n\prod_{i=1}^NI_i}}\right),
\end{align}
}where $C$ is the same as that in \eqref{eq:cost-c}, ``part 1'' is for the computation of  $\mbfb_n$ via \eqref{eq:compb}, ``part 2'' and ``part 3'' are respectively from the computations of the first and second terms in \eqref{eq:grada}.

Suppose $R_i<I_i$ for all $i=1,\cdots,N$. Then the quantity of \eqref{eq:cost-c} is dominated by the third part because in this case, 
$$R_j^2I_j<\big(\prod_{i=1}^j R_i\big)\big(\prod_{i=j}^N I_i\big),\qquad R_j\prod_{i=1}^N R_i<\big(\prod_{i=1}^j R_i\big)\big(\prod_{i=j}^N I_i\big).$$
The quantity of \eqref{eq:cost-a} is dominated by the first and third parts. Only taking account of the dominating terms, we claim that the quantities of \eqref{eq:cost-c} and \eqref{eq:cost-a} are similar. To see this, assume $R_i=R, I_i=I,$ for all $i$'s. Then the third part of \eqref{eq:cost-c} is $\sum_{j=1}^NR^jI^{N-j+1}$, and the sum of the first and third parts of \eqref{eq:cost-a} is
\begin{align*}
&\sum_{j=1}^{n-1}\big(\prod_{i=1}^jI_i\big)\big(\prod_{i=j}^N R_i\big)
+R_n\big(\prod_{i=1}^{n-1}I_i\big)\sum_{j=n+1}^N\big(\prod_{i=n+1}^jI_i\big)
\big(\prod_{i=j}^NR_i\big)+R_n\prod_{i=1}^NI_i\\
=&\sum_{j=1}^{n-1}I^jR^{N-j+1}+\sum_{j=n+1}^N I^{j-1}R^{N-j+2}+RI^N\\
=&\sum_{j=N-n+2}^NR^jI^{N-j+1}+\sum_{j=2}^{N-n+1}R^jI^{N-j+1}+RI^N\\
=&\sum_{j=1}^NR^jI^{N-j+1}.
\end{align*}
Hence, the costs for computing $\nabla_{\bm{\mcc}}\ell(\bm{\mcc},\mbfa)$ and $\nabla_{\mbfa_n}\ell(\bm{\mcc},\mbfa)$ are similar.

After obtaining the partial gradients $\nabla_{\bm{\mcc}}\ell(\bm{\mcc},\mbfa)$ and $\nabla_{\mbfa_n}\ell(\bm{\mcc},\mbfa)$, it remains to do some projections to nonnegative orthant to finish the updates in \eqref{update-c} and \eqref{update-a}, and the cost is proportional to the size of $\bm{\mcc}$ and $\mbfa_n$, i.e., $C_p\prod_{i=1}^NR_i$ and $C_pI_nR_n$ with $C_p\approx4$. The data fitting term can be evaluated by
$$\ell(\bm{\mcc},\mbfa)=\frac{1}{2}\left(\langle\mbfa_n^\top\mbfa_n,\mbfb_n\mbfb_n^\top\rangle-2\langle\mbfa_n,\mbfm_{(n)}\mbfb_n^\top\rangle+\|\bm{\mcm}\|_F^2\right),$$
where $\mbfb_n$ is defined in \eqref{eq:b}. Note that $\mbfa_n^\top\mbfa_n$, $\mbfb_n\mbfb_n^\top$ and $\mbfm_{(n)}\mbfb_n^\top$ have been obtained during the computation of $\nabla_{\bm{\mcc}}\ell(\bm{\mcc},\mbfa)$ and $\nabla_{\mbfa_n}\ell(\bm{\mcc},\mbfa)$, and $\|\bm{\mcm}\|_F^2$ can be pre-computed before running the algorithm. Hence, we need $C(R_n^2+I_nR_n)$ additional flops to evaluate $\ell(\bm{\mcc},\mbfa)$, where $C\approx2$. To get the objective value, we need $C(\prod_{i=1}^NR_i+\sum_{i=1}^NI_iR_i)$ more flops for the regularization terms.

Some more computations occur in choosing Lipschitz constants $L_c$ and $L_n$'s. When $R_n\ll I_n$ for all $n$, the cost for computing Lipschitz constants, projection to nonnegative orthant and objective evaluation is negligible compared to that for computing partial gradients $\nabla_{\bm{\mcc}}\ell(\bm{\mcc},\mbfa)$ and $\nabla_{\mbfa_n}\ell(\bm{\mcc},\mbfa)$. Omitting the negligible cost and only accounting the main cost in \eqref{eq:cost-c} and \eqref{eq:cost-a}, the per-iteration complexity of Algorithm \ref{alg:apg} is \begin{equation}\label{eq:cost}
 N\cdot\mathcal{O}\left(\sum_{j=1}^N\big(\prod_{i=1}^j R_i\big)\big(\prod_{i=j}^N I_i\big)+\sum_{j=1}^N\big(\prod_{i=1}^j I_i\big)\big(\prod_{i=j}^N R_i\big)\right).
\end{equation}

\section{Proof of Theorem \ref{thm:convg}}

\subsection{Subsequence convergence} First, we give a subsequence convergence result, namely, any limit point of $\{\bm{\mcw}^k\}$ is a stationary point.
Using Lemma 2.1 of \cite{xublock}, we have
\begin{align}
&F(\bm{\mcc}^{k,n-1},\mbfa_{j<n}^k,\mbfa_{j\ge n}^{k-1})-F(\bm{\mcc}^{k,n},\mbfa_{j<n}^k,\mbfa_{j\ge n}^{k-1})\cr
\ge &\frac{L_c^{k,n}}{2}\|\hat{\bm{\mcc}}^{k,n}-\bm{\mcc}^{k,n}\|_F^2+L_c^{k,n}\left\langle\hat{\bm{\mcc}}^{k,n}-\bm{\mcc}^{k,n-1},\bm{\mcc}^{k,n}-\hat{\bm{\mcc}}^{k,n}\right\rangle\cr
=&\frac{L_c^{k,n}}{2}\|\bm{\mcc}^{k,n-1}-\bm{\mcc}^{k,n}\|_F^2-\frac{L_c^{k,n}}{2}(\omega_c^{k,n})^2\|\bm{\mcc}^{k,n-2}-\bm{\mcc}^{k,n-1}\|_F^2\label{eq:keyineq}\\
\ge&\frac{L_c^{k,n}}{2}\|\bm{\mcc}^{k,n-1}-\bm{\mcc}^{k,n}\|_F^2-\frac{L_c^{k,n-1}}{2}\delta_\omega^2\|\bm{\mcc}^{k,n-2}-\bm{\mcc}^{k,n-1}\|_F^2,\label{eq:ineq-c}
\end{align}
where we have used $\omega_c^{k,n}\le\delta_\omega\sqrt{\frac{L_c^{k,n-1}}{L_c^{k,n}}}$ to get the last inequality. Note that if the re-update in Line \textbf{\ref{reupdate}} is performed, then $\omega_c^{k,n}=0$ in \eqref{eq:keyineq}, and \eqref{eq:ineq-c} still holds. Similarly, we have
\begin{equation}
\begin{array}{ll}&F(\bm{\mcc}^{k,n},\mbfa_{j<n}^k,\mbfa_{j\ge n}^{k-1})-F(\bm{\mcc}^{k,n},\mbfa_{j\le n}^k,\mbfa_{j> n}^{k-1})\\[0.1cm]
\ge&\frac{L_n^k}{2}\|\mbfa_n^{k-1}-\mbfa_n^k\|_F^2-\frac{L_n^{k-1}}{2}\delta_\omega^2\|\mbfa_n^{k-2}-\mbfa_n^{k-1}\|_F^2.\label{eq:ineq-a}
\end{array}
\end{equation}
Summing \eqref{eq:ineq-c} and \eqref{eq:ineq-a} together over $n$ and noting $\bm{\mcc}^{k,-1}=\bm{\mcc}^{k-1,N-1}, \bm{\mcc}^{k,0}=\bm{\mcc}^{k-1,N}$ yield
{\small\begin{align}
&F(\bm{\mcw}^{k-1})-F(\bm{\mcw}^{k})\cr
\ge&\sum_{n=1}^N\left(\frac{L_c^{k,n}}{2}\|\bm{\mcc}^{k,n-1}-\bm{\mcc}^{k,n}\|_F^2-\frac{L_c^{k,n-1}}{2}\delta_\omega^2\|\bm{\mcc}^{k,n-2}-\bm{\mcc}^{k,n-1}\|_F^2\right.\cr
&\hspace{0.8cm}\left.+\frac{L_n^k}{2}\|\mbfa_n^{k-1}-\mbfa_n^k\|_F^2-\frac{L_n^{k-1}}{2}\delta_\omega^2\|\mbfa_n^{k-2}-\mbfa_n^{k-1}\|_F^2\right)\cr
=&\frac{L_c^{k,N}}{2}\|\bm{\mcc}^{k,N-1}-\bm{\mcc}^{k,N}\|_F^2-\frac{L_c^{k-1,N}}{2}\delta_\omega^2\|\bm{\mcc}^{k-1,N-1}-\bm{\mcc}^{k-1,N}\|_F^2\label{eq:ineq-ca}\\
&+\sum_{n=1}^{N-1}\frac{(1-\delta_\omega^2)L_c^{k,n}}{2}\|\bm{\mcc}^{k,n-1}-\bm{\mcc}^{k,n}\|_F^2\cr
&+\sum_{n=1}^N\left(\frac{L_n^k}{2}\|\mbfa_n^{k-1}-\mbfa_n^k\|_F^2-\frac{L_n^{k-1}}{2}\delta_\omega^2\|\mbfa_n^{k-2}-\mbfa_n^{k-1}\|_F^2\right).\nonumber
\end{align}}
Summing \eqref{eq:ineq-ca} over $k$, we have
\begin{align}
&F(\bm{\mcw}^{0})-F(\bm{\mcw}^{K})\cr
\ge&\sum_{k=1}^K\sum_{n=1}^N\left(\frac{(1-\delta_\omega^2)L_c^{k,n}}{2}\|\bm{\mcc}^{k,n-1}-\bm{\mcc}^{k,n}\|_F^2+\frac{(1-\delta_\omega^2)L_n^{k}}{2}\|\mbfa_n^{k-1}-\mbfa_n^k\|_F^2\right)\cr
\ge&\frac{(1-\delta_\omega^2)L_d}{2}\sum_{k=1}^K\sum_{n=1}^N\left(\|\bm{\mcc}^{k,n-1}-\bm{\mcc}^{k,n}\|_F^2+\|\mbfa_n^{k-1}-\mbfa_n^k\|_F^2\right).
\end{align}
Letting $K\to\infty$ and observing $F$ is lower bounded, we have
\begin{equation}\label{eq:summable}\sum_{k=1}^\infty\sum_{n=1}^N\left(\|\bm{\mcc}^{k,n-1}-\bm{\mcc}^{k,n}\|_F^2+\|\mbfa_n^{k-1}-\mbfa_n^k\|_F^2\right)<\infty.
\end{equation}

Suppose $\bar{\bm{\mcw}}=(\bar{\bm{\mcc}},\bar{\mbfa}_1,\cdots,\bar{\mbfa}_N)$ is a limit point of $\{\bm{\mcw}^k\}$. Then there is a subsequence $\{\bm{\mcw}^{k'}\}$ converging to $\bar{\bm{\mcw}}$. Since $\{L_c^{k,n},L_n^k\}$ is bounded, passing another subsequence if necessary, we assume $L_c^{k',n}\to\bar{L}_c^n$ and $L_n^{k'}\to\bar{L}_n$. Note that \eqref{eq:summable} implies $\mbfa^{k'-1}\to \bar{\mbfa}$ and $\bm{\mcc}^{m,n}\to\bar{\bm{\mcc}}$ for all $n$ and $m=k',k'-1,k'-2$, as $k\to\infty$. Hence, $\hat{\bm{\mcc}}^{k',n}\to\bar{\bm{\mcc}}$ for all $n$, as $k\to\infty$. Recall that
\begin{equation}\label{min-c}
\hspace{-0.1cm}\bm{\mcc}^{k',n}=\argmin_{\bm{\mcc}\ge0}\left\langle\nabla_{\bm{\mcc}}
\ell(\hat{\bm{\mcc}}^{k',n},\mbfa_{j<n}^{k'},\mbfa_{j\ge n}^{k'-1}),\bm{\mcc}-\hat{\bm{\mcc}}^{k',n}\right\rangle+\frac{L_c^{k',n}}{2}\|\bm{\mcc}-\hat{\bm{\mcc}}^{k',n}\|_F^2+\lambda_c\|\bm{\mcc}\|_1.\hspace{-0.2cm}
\end{equation}
Letting $k\to\infty$ and using the continuity of the objective in \eqref{min-c} give
$$\bar{\bm{\mcc}}=\argmin_{\bm{\mcc}\ge0}\left\langle\nabla_{\bm{\mcc}}
\ell(\bar{\bm{\mcc}},\bar{\mbfa}),\bm{\mcc}-\bar{\bm{\mcc}}\right\rangle+\frac{\bar{L}_c^{n}}{2}\|\bm{\mcc}-\bar{\bm{\mcc}}\|_F^2+\lambda_c\|\bm{\mcc}\|_1.$$
Hence, $\bar{\bm{\mcc}}$ satisfies the first-order optimality condition
\begin{equation}\label{opt-c}
\left\langle \nabla_{\bm{\mcc}}
\ell(\bar{\bm{\mcc}},\bar{\mbfa})+\lambda_c\bm{\mathcal{P}}_c,\bm{\mcc}-\bar{\bm{\mcc}}\right\rangle\ge0,~\text{for all }\bm{\mcc}\ge0, \text{ some }\bm{\mathcal{P}}_c\in\partial\|\bar{\bm{\mcc}}\|_1.
\end{equation} 
Similarly, we have for all $n$ that
\begin{equation}\label{opt-a}
\left\langle \nabla_{\mbfa_n}
\ell(\bar{\bm{\mcc}},\bar{\mbfa})+\lambda_n\bm{\mathbf{P}}_n,\mbfa_n-\bar{\mbfa}_n\right\rangle\ge0,~\text{for all }\mbfa_n\ge0, \text{ some }\bm{\mathbf{P}}_n\in\partial\|\bar{\mbfa}_n\|_1.
\end{equation} 
Note \eqref{opt-c} together with \eqref{opt-a} gives the first-order optimality conditions of \eqref{eq:spntd}. Hence, $\bar{\bm{\mcw}}$ is a stationary point. 

\subsection{Global convergence} Next we show the entire sequence $\{\bm{\mcw}^k\}$ converges to a limit point $\bar{\bm{\mcw}}$. Since all $\lambda_c,\lambda_1,\cdots,\lambda_N$ are positive, the sequence $\{\bm{\mcw}^k\}$ is bounded and admits a finite limit point $\bar{\bm{\mcw}}$. Let $E=\{\bm{\mcw}: \|\bm{\mcw}\|_F\le 4\nu\}$, where $\|\bm{\mcw}\|_F\triangleq\sqrt{\|\bm{\mcc}\|_F^2+\|\mbfa\|_F^2}$ and $\nu$ is a constant such that $\|(\bm{\mcc}^{k,n},\mbfa^k)\|_F\le\nu$ for all $k,n$. Let $L_G$ be a uniform Lipschitz constant of $\nabla_{\bm{\mcc}}\ell(\bm{\mcw})$ and $\nabla_{\mbfa_n}\ell(\bm{\mcw}), n = 1,\cdots,N,$ over $E$, namely,
\begin{subequations}\label{lips}
\begin{align}
\|\nabla_{\bm{\mcc}}\ell(\bm{\mcy})-\nabla_{\bm{\mcc}}\ell(\bm{\mcz})\|_F\le& L_G\|\bm{\mcy}-\bm{\mcz}\|_F,~\forall \bm{\mcy},\bm{\mcz}\in E,\label{lips-c}\\
\|\nabla_{\mbfa_n}\ell(\bm{\mcy})-\nabla_{\mbfa_n}\ell(\bm{\mcz})\|_F\le& L_G\|\bm{\mcy}-\bm{\mcz}\|_F,~\forall \bm{\mcy},\bm{\mcz}\in E,~\forall n,\label{lips-a}
\end{align}
\end{subequations}
Let 
$$H(\bm{\mcc},\mbfa)=\ell(\bm{\mcc},\mbfa)+\lambda_c\|\bm{\mcc}\|_1+\delta_+(\bm{\mcc})+\overset{N}{\underset{n=1}{\sum}}\big(\lambda_n\|\mbfa_n\|_1+\delta_+(\mbfa_n)\big)$$
and 
$$r_c(\bm{\mcc})=\lambda_c\|\bm{\mcc}\|_1+\delta_+(\bm{\mcc}), \quad r_n(\mbfa_n)=\lambda_n\|\mbfa_n\|_1+\delta_+(\mbfa_n),\ n=1,\cdots,N,$$
where $\delta_+(\cdot)$ is the indicator function on nonnegative orthant, namely, it equals \emph{zero} if the argument is component-wise nonnegative and $+\infty$ otherwise.

Note that \eqref{eq:spntd} is equivalent to 
\begin{equation}\label{eq:uncon}
\min_{\bm{\mcc},\mbfa}H(\bm{\mcc},\mbfa).
\end{equation}
Recall that $H$ satisfies the KL property (see \cite{lojasiewicz1993geometrie, bolte2007lojasiewicz} for example) at $\bar{\bm{\mcw}}$, namely, there exist $\gamma,\rho>0$, $\theta\in[0,1)$, and a neighborhood $B(\bar{\bm{\mcw}},\rho)\triangleq\{\bm{\mcw}:\|\bm{\mcw}-\bar{\bm{\mcw}}\|_F\le\rho\}$ such that
\begin{equation}\label{eq:KL}
|H(\bm{\mcw})-H(\bar{\bm{\mcw}})|^\theta\le \gamma\cdot\text{dist}(\mathbf{0},\partial H(\bm{\mcw})),~\text{for all }\bm{\mcw}\in B(\bar{\bm{\mcw}},\rho).
\end{equation}

Denote $H_k=H(\bm{\mcw}^k)-H(\bar{\bm{\mcw}})$. Then $H_k\downarrow 0$. Since $\bar{\bm{\mcw}}$ is a limit point of $\{\bm{\mcw}^k\}$ and $\|\mbfa^k-\mbfa^{k+1}\|_F\to 0,\|\bm{\mcc}^{k,n-1}-\bm{\mcc}^{k,n}\|_F\to 0$ for all $k,n$ from \eqref{eq:summable}, for any $T>0$, there must exist $k_0$ such that $\bm{\mcw}^j\in B(\bar{\bm{\mcw}},\rho), j=k_0,k_0+1,k_0+2$ and
\begin{align*}
&T\big(H_{k_0}^{1-\theta}+\|\mbfa^{k_0}-\mbfa^{k_0+1}\|_F+\|\mbfa^{k_0+1}-\mbfa^{k_0+2}\|_F+\|\bm{\mcc}^{k_0+2,N-1}-\bm{\mcc}^{k_0+2,N}\|_F\big)\\
&\hspace{0.3cm}+\|\bm{\mcw}^{k_0+2}-\bar{\bm{\mcw}}\|_F<\rho.
\end{align*}
Take $T$ as specified in \eqref{eq:T} and consider the sequence $\{\bm{\mcw}^k\}_{k\ge k_0}$, which is equivalent to starting the algorithm from $\bm{\mcw}^{k_0}$ and, thus without loss of generality, let $k_0=0$, namely, $\bm{\mcw}^j\in B(\bar{\bm{\mcw}},\rho), j=0,1,2$, and
\begin{equation}\label{cond1}T\big(H_{0}^{1-\theta}+\|\mbfa^{0}-\mbfa^{1}\|_F+\|\mbfa^{1}-\mbfa^{2}\|_F+\|\bm{\mcc}^{2,N-1}-\bm{\mcc}^{2,N}\|_F\big)+\|\bm{\mcw}^{2}-\bar{\bm{\mcw}}\|_F<\rho.
\end{equation}

The idea of our proof is to show 
\begin{equation}\label{belong}
\bm{\mcw}^k\in B(\bar{\bm{\mcw}},\rho),~\text{for all }k,
\end{equation}
and employ the KL inequality \eqref{eq:KL} to show $\{\bm{\mcw}^k\}$ is a Cauchy sequence, thus the entire sequence converges. Assume $\bm{\mcw}^k\in B(\bar{\bm{\mcw}},\rho)$ for $0\le k\le K$. We go to show $\bm{\mcw}^{K+1}\in B(\bar{\bm{\mcw}},\rho)$ and conclude \eqref{belong} by induction.

Note that
\begin{align*}
&\partial H(\bm{\mcw}^k)=\left\{\partial r_1(\mbfa_1^k)
+\nabla_{\mbfa_1}\ell(\bm{\mcw}^k)\right\}\times\cdots\times\left\{\partial r_N(\mbfa_N^k)
+\nabla_{\mbfa_N}\ell(\bm{\mcw}^k)\right\}\times\left\{\partial r_c(\bm{\mcc}^{k,N})
+\nabla_{\bm{\mcc}}\ell(\bm{\mcw}^k)\right\},
\end{align*}
and for all $n$ and $k$
\begin{align*}
-L_n^k(\mbfa_n^k-\hat{\mbfa}_n^k)-\nabla_{\mbfa_n}\ell(\bm{\mcc}^{k,n},\mbfa_{j<n}^k,\hat{\mbfa}_n^k,\mbfa_{j\ge n}^{k-1})+\nabla_{\mbfa_n}\ell(\bm{\mcw}^k)\in&\partial r_n(\mbfa_n^k)
+\nabla_{\mbfa_n}\ell(\bm{\mcw}^k),\\
-L_c^{k,N}(\bm{\mcc}^{k,N}-\hat{\bm{\mcc}}^{k,N})-\nabla_{\bm{\mcc}}\ell(\hat{\bm{\mcc}}^{k,N},\mbfa_{j<N}^k,{\mbfa}_N^{k-1})+\nabla_{\bm{\mcc}}\ell(\bm{\mcw}^k)\in&\partial r_c(\bm{\mcc}^{k,N})
+\nabla_{\bm{\mcc}}\ell(\bm{\mcw}^k).
\end{align*}
Hence, for all $k\le K$,
{\small\begin{align}
&\text{dist}\big(\mathbf{0},\partial H(\bm{\mcw}^k)\big)\cr
\le&\big\|(L_1^k(\mbfa_1^k-\hat{\mbfa}_1^k),\cdots,L_1^k(\mbfa_1^k-\hat{\mbfa}_1^k),L_c^{k,n}(\bm{\mcc}^{k,N}-\hat{\bm{\mcc}}^{k,N}))\big\|_F\cr
&+\sum_{n=1}^N\big\|\nabla_{\mbfa_n}\ell(\bm{\mcc}^{k,n},\mbfa_{j<n}^k,\hat{\mbfa}_n^k,\mbfa_{j\ge n}^{k-1})-\nabla_{\mbfa_n}\ell(\bm{\mcw}^k)\big\|_F\cr
&+\big\|\nabla_{\bm{\mcc}}\ell(\hat{\bm{\mcc}}^{k,N},\mbfa_{j<N}^k,{\mbfa}_N^{k-1})-\nabla_{\bm{\mcc}}\ell(\bm{\mcw}^k)\big\|_F\cr
\le&L_u\big(\|\mbfa^k-\mbfa^{k-1}\|_F+\|\mbfa^{k-1}-\mbfa^{k-2}\|_F\big)+L_u\big(\|\bm{\mcc}^{k,N}-\bm{\mcc}^{k,N-1}\|_F+\|\bm{\mcc}^{k,N-1}-\bm{\mcc}^{k,N-2}\|_F\big)\cr
&+\sum_{n=1}^NL_G\big(\|\bm{\mcc}^{k,n}-\bm{\mcc}^{k,N}\|_F+\|\mbfa^k-\mbfa^{k-1}\|_F+\|\mbfa^{k-1}-\mbfa^{k-2}\|_F\big)\cr
&+L_G\big(\|\bm{\mcc}^{k,N}-\bm{\mcc}^{k,N-1}\|_F+\|\bm{\mcc}^{k,N-1}-\bm{\mcc}^{k,N-2}\|_F+\|\mbfa^k-\mbfa^{k-1}\|_F\big)\cr
\le&\big(L_u+(N+1)L_G\big)\left(\|\mbfa^k-\mbfa^{k-1}\|_F+\|\mbfa^{k-1}-\mbfa^{k-2}\|_F\right.\label{eq:ineq3}\\
&\hspace{0.8cm}\left.+\|\bm{\mcc}^{k,N}-\bm{\mcc}^{k,N-1}\|_F+\sum_{n=1}^{N-1}\|\bm{\mcc}^{k,n-1}-\bm{\mcc}^{k,n}\|_F\right),\nonumber
\end{align}}where we have used $L_n^k,L_c^{k,n}\le L_u,~\forall k, n$ and \eqref{lips} to have the second inequality, and the third inequality is obtained from $\|\bm{\mcc}^{k,n}-\bm{\mcc}^{k,N}\|_F\le\sum_{i=n}^{N-1}\|\bm{\mcc}^{k,i}-\bm{\mcc}^{k,i+1}\|_F$ and doing some simplification.
Using the KL inequality \eqref{eq:KL} at $\bm{\mcw}=\bm{\mcw}^k$ and the inequality
$$\frac{s^\theta}{1-\theta}(s^{1-\theta}-t^{1-\theta})\ge s-t,~\forall s,t\ge0,$$
we get
\begin{equation}\label{eq:ineq4}
\frac{\gamma}{1-\theta}\text{dist}(\mathbf{0},\partial H(\bm{\mcw}^k))(H_k^{1-\theta}-H_{k+1}^{1-\theta})\ge H_k-H_{k+1}.
\end{equation}
By \eqref{eq:ineq-ca}, we have
\begin{align}
H_k-H_{k+1}
\ge&\frac{L_c^{k+1,N}}{2}\|\bm{\mcc}^{k+1,N-1}-\bm{\mcc}^{k+1,N}\|_F^2-\frac{L_c^{k,N}}{2}\delta_\omega^2\|\bm{\mcc}^{k,N-1}-\bm{\mcc}^{k,N}\|_F^2\label{eq:ineq-ca2}\\
&+\sum_{n=1}^{N-1}\frac{(1-\delta_\omega^2)L_c^{k+1,n}}{2}\|\bm{\mcc}^{k+1,n-1}-\bm{\mcc}^{k+1,n}\|_F^2\cr
&+\sum_{n=1}^N\left(\frac{L_n^{k+1}}{2}\|\mbfa_n^{k}-\mbfa_n^{k+1}\|_F^2-\frac{L_n^{k}}{2}\delta_\omega^2\|\mbfa_n^{k-1}-\mbfa_n^{k}\|_F^2\right).\nonumber
\end{align}
Combining \eqref{eq:ineq3}, \eqref{eq:ineq4}, \eqref{eq:ineq-ca2} and noting $L_c^{k+1,n}\ge L_d$ yield
{\begin{align}
&\frac{\gamma(L_u+(N+1)L_G)}{1-\theta}(H_k^{1-\theta}-H_{k+1}^{1-\theta})\big[\|\mbfa^k-\mbfa^{k-1}\|_F+\|\mbfa^{k-1}-\mbfa^{k-2}\|_F\cr
&\hspace{4cm}+\|\bm{\mcc}^{k,N}-\bm{\mcc}^{k,N-1}\|_F+\sum_{n=1}^{N-1}\|\bm{\mcc}^{k,n-1}-\bm{\mcc}^{k,n}\|_F\big]\cr
&+\delta_\omega^2\left\|(\sqrt{L_1^k}\mbfa_1^{k-1},\cdots,\sqrt{L_N^k}\mbfa_N^{k-1},\sqrt{L_c^{k,N}}\bm{\mcc}^{k,N-1})\right.\cr&\hspace{1cm}\left.-(\sqrt{L_1^k}\mbfa_1^{k},\cdots,\sqrt{L_N^k}\mbfa_N^{k},\sqrt{L_c^{k,N}}\bm{\mcc}^{k,N})\right\|_F^2\cr
\ge&\left\|(\sqrt{L_1^{k+1}}\mbfa_1^{k},\cdots,\sqrt{L_N^{k+1}}\mbfa_N^{k},\sqrt{L_c^{k+1,N}}\bm{\mcc}^{k+1,N-1})\right.\label{eq:ineq5}\\
&\hspace{0.4cm}\left.-(\sqrt{L_1^{k+1}}\mbfa_1^{k+1},\cdots,\sqrt{L_N^{k+1}}\mbfa_N^{k+1},\sqrt{L_c^{k+1,N}}\bm{\mcc}^{k+1,N})\right\|_F^2\nonumber\\
&+\frac{(1-\delta_\omega^2)L_d}{2}\sum_{n=1}^{N-1}\|\bm{\mcc}^{k+1,n-1}-\bm{\mcc}^{k+1,n}\|_F^2.\nonumber
\end{align}
By Cauchy-Schwart inequality, we estimate 
{\small\begin{align}
&\sqrt{\text{right side of inequality \eqref{eq:ineq5}}}\nonumber\\
\ge&\frac{1+\delta_\omega}{2}\left\|(\sqrt{L_1^{k+1}}\mbfa_1^{k},\cdots,\sqrt{L_N^{k+1}}\mbfa_N^{k},\sqrt{L_c^{k+1,N}}\bm{\mcc}^{k+1,N-1})\right.\cr
&\hspace{1.2cm}\left.-(\sqrt{L_1^{k+1}}\mbfa_1^{k+1},\cdots,\sqrt{L_N^{k+1}}\mbfa_N^{k+1},\sqrt{L_c^{k+1,N}}\bm{\mcc}^{k+1,N})\right\|_F\nonumber\\
&+\eta\sum_{n=1}^{N-1}\|\bm{\mcc}^{k+1,n-1}-\bm{\mcc}^{k+1,n}\|_F,\label{eq:ineq7}
\end{align}}
where $\eta>0$ is sufficiently small and depends on $\delta_\omega,L_d,N$,
and
\begin{align}
&\sqrt{\text{left side of inequality \eqref{eq:ineq5}}}\cr
\le&\frac{\mu\gamma(L_u+(N+1)L_G)}{4(1-\theta)}(H_k^{1-\theta}-H_{k+1}^{1-\theta})\label{eq:ineq6}\\
&+\frac{1}{\mu}\big[\|\mbfa^k-\mbfa^{k-1}\|_F+\|\mbfa^{k-1}-\mbfa^{k-2}\|_F+\|\bm{\mcc}^{k,N}-\bm{\mcc}^{k,N-1}\|_F+\sum_{n=1}^{N-1}\|\bm{\mcc}^{k,n-1}-\bm{\mcc}^{k,n}\|_F\big]\cr
&+\delta_\omega\left\|(\sqrt{L_1^k}\mbfa_1^{k-1},\cdots,\sqrt{L_N^k}\mbfa_N^{k-1},\sqrt{L_c^{k,N}}\bm{\mcc}^{k,N-1})-(\sqrt{L_1^k}\mbfa_1^{k},\cdots,\sqrt{L_N^k}\mbfa_N^{k},\sqrt{L_c^{k,N}}\bm{\mcc}^{k,N})\right\|_F,\nonumber
\end{align}
where $\mu>0$ is a sufficiently large constant such that $\frac{1}{\mu}<\min(\eta,\frac{1-\delta_\omega}{4}\sqrt{\frac{L_d}{2}}).$
Combining \eqref{eq:ineq5},\eqref{eq:ineq6}, \eqref{eq:ineq7} and summing them over $k$ from $2$ to $K$ give
\begin{align}
&\frac{\mu\gamma(L_u+(N+1)L_G)}{4(1-\theta)}(H_2^{1-\theta}-H_{K+1}^{1-\theta})\cr
&+\frac{1}{\mu}\sum_{k=2}^K\big[\|\mbfa^k-\mbfa^{k-1}\|_F+\|\mbfa^{k-1}-\mbfa^{k-2}\|_F+\|\bm{\mcc}^{k,N}-\bm{\mcc}^{k,N-1}\|_F+\sum_{n=1}^{N-1}\|\bm{\mcc}^{k,n-1}-\bm{\mcc}^{k,n}\|_F\big]\cr
&+\delta_\omega\sum_{k=2}^K\left\|(\sqrt{L_1^k}\mbfa_1^{k-1},\cdots,\sqrt{L_N^k}\mbfa_N^{k-1},\sqrt{L_c^{k,N}}\bm{\mcc}^{k,N-1})-(\sqrt{L_1^k}\mbfa_1^{k},\cdots,\sqrt{L_N^k}\mbfa_N^{k},\sqrt{L_c^{k,N}}\bm{\mcc}^{k,N})\right\|_F\cr
\ge &\frac{1+\delta_\omega}{2}\sum_{k=2}^K\left\|(\sqrt{L_1^{k+1}}\mbfa_1^{k},\cdots,\sqrt{L_N^{k+1}}\mbfa_N^{k},\sqrt{L_c^{k+1,N}}\bm{\mcc}^{k+1,N-1})\right.\cr
&\hspace{1cm}\left.-(\sqrt{L_1^{k+1}}\mbfa_1^{k+1},\cdots,\sqrt{L_N^{k+1}}\mbfa_N^{k+1},\sqrt{L_c^{k+1,N}}\bm{\mcc}^{k+1,N})\right\|_F+\eta\sum_{k=2}^K\sum_{n=1}^{N-1}\|\bm{\mcc}^{k+1,n-1}-\bm{\mcc}^{k+1,n}\|_F.\nonumber
\end{align}
Simplifying the above inequality, we have
\begin{align}
&\frac{\mu\gamma(L_u+(N+1)L_G)}{4(1-\theta)}(H_2^{1-\theta}-H_{K+1}^{1-\theta})\nonumber\\
&+\frac{1}{\mu}\sum_{k=2}^K\left(\|\mbfa^k-\mbfa^{k-1}\|_F+\|\mbfa^{k-1}-\mbfa^{k-2}\|_F+\|\bm{\mcc}^{k,N}-\bm{\mcc}^{k,N-1}\|_F\right)\nonumber\\
&+\delta_\omega\big\|(\sqrt{L_1^2}\mbfa_1^{1},\cdots,\sqrt{L_N^2}\mbfa_N^{1},\sqrt{L_c^{2,N}}\bm{\mcc}^{2,N-1})-(\sqrt{L_1^2}\mbfa_1^{2},\cdots,\sqrt{L_N^2}\mbfa_N^{2},\sqrt{L_c^{2,N}}\bm{\mcc}^{2,N})\big\|_F\nonumber\\
\ge &\frac{1+\delta_\omega}{2}\left\|(\sqrt{L_1^{K+1}}\mbfa_1^{K},\cdots,\sqrt{L_N^{K+1}}\mbfa_N^{K},\sqrt{L_c^{K+1,N}}\bm{\mcc}^{K+1,N-1})\right.\label{eq:ineq8}\\
&\hspace{1.5cm}\left.-(\sqrt{L_1^{K+1}}\mbfa_1^{K+1},\cdots,\sqrt{L_N^{K+1}}\mbfa_N^{K+1},\sqrt{L_c^{K+1,N}}\bm{\mcc}^{K+1,N})\right\|_F\nonumber\\
&+\frac{1-\delta_\omega}{2}\sum_{k=2}^{K-1}\left\|(\sqrt{L_1^{k+1}}\mbfa_1^{k},\cdots,\sqrt{L_N^{k+1}}\mbfa_N^{k},\sqrt{L_c^{k+1,N}}\bm{\mcc}^{k+1,N-1})\right.\nonumber\\
&\hspace{2cm}\left.-(\sqrt{L_1^{k+1}}\mbfa_1^{k+1},\cdots,\sqrt{L_N^{k+1}}\mbfa_N^{k+1},\sqrt{L_c^{k+1,N}}\bm{\mcc}^{k+1,N})\right\|_F\nonumber\\
&+(\eta-\frac{1}{\mu})\sum_{k=2}^K\sum_{n=1}^{N-1}\|\bm{\mcc}^{k+1,n-1}-\bm{\mcc}^{k+1,n}\|_F.\nonumber
\end{align}
Note that 
\begin{align}
&\left\|(\sqrt{L_1^{k+1}}\mbfa_1^{k},\cdots,\sqrt{L_N^{k+1}}\mbfa_N^{k},\sqrt{L_c^{k+1,N}}\bm{\mcc}^{k+1,N-1})\right.\cr
&\hspace{0.3cm}\left.-(\sqrt{L_1^{k+1}}\mbfa_1^{k+1},\cdots,\sqrt{L_N^{k+1}}\mbfa_N^{k+1},\sqrt{L_c^{k+1,N}}\bm{\mcc}^{k+1,N})\right\|_F^2\nonumber\\
=&\sum_{n=1}^NL_n^{k+1}\|\mbfa_n^k-\mbfa_n^{k+1}\|_F^2+L_c^{k+1,N}\|\bm{\mcc}^{k+1,N-1}-\bm{\mcc}^{k+1,N}\|_F^2\nonumber\\
\ge&L_d(\|\mbfa^k-\mbfa^{k+1}\|_F^2+\|\bm{\mcc}^{k+1,N-1}-\bm{\mcc}^{k+1,N}\|_F^2\nonumber\\
\ge&\frac{L_d}{2}(\|\mbfa^k-\mbfa^{k+1}\|_F+\|\bm{\mcc}^{k+1,N-1}-\bm{\mcc}^{k+1,N}\|_F)^2\label{eq:ineq9}
\end{align}
Plugging \eqref{eq:ineq9} to inequality \eqref{eq:ineq8} gives
\begin{align}
&\frac{\mu\gamma(L_u+(N+1)L_G)}{4(1-\theta)}(H_2^{1-\theta}-H_{K+1}^{1-\theta})\cr
&+\frac{1}{\mu}\sum_{k=2}^K\left(\|\mbfa^k-\mbfa^{k-1}\|_F+\|\mbfa^{k-1}-\mbfa^{k-2}\|_F+\|\bm{\mcc}^{k,N}-\bm{\mcc}^{k,N-1}\|_F\right)\nonumber\\
&+\delta_\omega\|(\sqrt{L_1^2}\mbfa_1^{1},\cdots,\sqrt{L_N^2}\mbfa_N^{1},\sqrt{L_c^{2,N}}\bm{\mcc}^{2,N-1})-(\sqrt{L_1^2}\mbfa_1^{2},\cdots,\sqrt{L_N^2}\mbfa_N^{2},\sqrt{L_c^{2,N}}\bm{\mcc}^{2,N})\|_F\nonumber\\
\ge&\frac{1+\delta_\omega}{2}\sqrt{\frac{L_d}{2}}(\|\mbfa^K-\mbfa^{K+1}\|_F+\|\bm{\mcc}^{K+1,N-1}-\bm{\mcc}^{K+1,N}\|_F)\nonumber\\
&+\frac{1-\delta_\omega}{2}\sqrt{\frac{L_d}{2}}\sum_{k=2}^{K-1}(\|\mbfa^k-\mbfa^{k+1}\|_F+\|\bm{\mcc}^{k+1,N-1}-\bm{\mcc}^{k+1,N}\|_F)\nonumber\\
&+(\eta-\frac{1}{\mu})\sum_{k=2}^K\sum_{n=1}^{N-1}\|\bm{\mcc}^{k+1,n-1}-\bm{\mcc}^{k+1,n}\|_F,\nonumber
\end{align}
which implies by noting $H_0\ge H_k\ge 0$, $\bm{\mcc}^{k+1,0}=\bm{\mcc}^{k,N}$ and $L_n^k,L_c^{k,n}\le L_u,~\forall k,n$ that
\begin{align}
&\frac{\mu\gamma(L_u+(N+1)L_G)}{4(1-\theta)}H_0^{1-\theta}+\frac{1}{\mu}\left(2\|\mbfa^1-\mbfa^{2}\|_F+\|\mbfa^{0}-\mbfa^{1}\|_F+\|\bm{\mcc}^{2,N}-\bm{\mcc}^{2,N-1}\|_F\right)\nonumber\\
&+\delta_\omega\sqrt{L_u}\big(\|\mbfa^1-\mbfa^2\|_F+\|\bm{\mcc}^{2,N-1}-\bm{\mcc}^{2,N}\|_F\big)\nonumber\\
\ge&\frac{1+\delta_\omega}{2}\sqrt{\frac{L_d}{2}}\big(\|\mbfa^K-\mbfa^{K+1}\|_F+\|\bm{\mcc}^{K+1,N-1}-\bm{\mcc}^{K+1,N}\|_F\big)\nonumber\\
&+(\frac{1-\delta_\omega}{2}\sqrt{\frac{L_d}{2}}-\frac{2}{\mu})\sum_{k=2}^{K-1}\big(\|\mbfa^k-\mbfa^{k+1}\|_F+\|\bm{\mcc}^{k+1,N-1}-\bm{\mcc}^{k+1,N}\|_F\big)\nonumber\\
&+(\eta-\frac{1}{\mu})\sum_{k=2}^K\|\bm{\mcc}^{k,N}-\bm{\mcc}^{k+1,N-1}\|_F,\nonumber\\
\ge&\tau\big(\|\mbfa^K-\mbfa^{K+1}\|_F+\|\bm{\mcc}^{K,N}-\bm{\mcc}^{K+1,N}\|_F\big)\label{eq:ineq10}\\
&+\tau\sum_{k=2}^{K-1}\big(\|\mbfa^k-\mbfa^{k+1}\|_F+\|\bm{\mcc}^{k,N}-\bm{\mcc}^{k+1,N}\|_F\big),\nonumber
\end{align}
where 
$\tau=\min\left(\frac{1-\delta_\omega}{2}\sqrt{\frac{L_d}{2}}-\frac{2}{\mu},~\eta-\frac{1}{\mu}\right).$
Let 
\begin{equation}\label{eq:T}
T=\max\left(\frac{\mu\gamma(L_u+(N+1)L_G)}{4\tau(1-\theta)},~\frac{1}{2\mu\tau}+\frac{\delta_\omega}{\tau}\sqrt{L_u}\right).
\end{equation}
Then \eqref{eq:ineq10} implies
\begin{align}
&T\big(H_0^{1-\theta}+\|\mbfa^0-\mbfa^1\|_F+\|\mbfa^1-\mbfa^2\|_F+\|\bm{\mcc}^{2,N-1}-\bm{\mcc}^{2,N}\|_F\big)\nonumber\\
\ge&\|\bm{\mcw}^K-\bm{\mcw}^{K+1}\|_F+\sum_{k=2}^{K-1}\|\bm{\mcw}^k-\bm{\mcw}^{k+1}\|_F\label{eq:ineq11},
\end{align}
from which we have
\begin{align*}
&\|\bm{\mcw}^{K+1}-\bar{\bm{\mcw}}\|_F\\
\le&\|\bm{\mcw}^K-\bm{\mcw}^{K+1}\|_F+\sum_{k=2}^{K-1}\|\bm{\mcw}^k-\bm{\mcw}^{k+1}\|_F+\|\bm{\mcw}^{2}-\bar{\bm{\mcw}}\|_F\\
\le &T\big(H_0^{1-\theta}+\|\mbfa^0-\mbfa^1\|_F+\|\mbfa^1-\mbfa^2\|_F+\|\bm{\mcc}^{2,N-1}-\bm{\mcc}^{2,N}\|_F\big)+\|\bm{\mcw}^{2}-\bar{\bm{\mcw}}\|_F<\rho.
\end{align*}
Hence, $\bm{\mcw}^{K+1}\in B(\bar{\bm{\mcw}},\rho)$. By induction, we have $\bm{\mcw}^{k}\in B(\bar{\bm{\mcw}},\rho)$ for all $k$, so \eqref{eq:ineq11} holds for all $K$. Letting $K\to\infty$ gives $\sum_{k=2}^{\infty}\|\bm{\mcw}^k-\bm{\mcw}^{k+1}\|_F<\infty$, namely, $\{\bm{\mcw}^k\}$ is a Cauchy sequence and, thus $\bm{\mcw}^k$ converges. Since $\bar{\bm{\mcw}}$ is a limit point of $\{\bm{\mcw}^k\}$, then $\bm{\mcw}^k\to \bar{\bm{\mcw}}$. This completes the proof.

\end{document}